\newtheorem{remark}{Remark}
\makeatletter\@addtoreset{equation}{section} \makeatother
\newtheorem{theorem}{Theorem}[section]
\newtheorem{lemma}{Lemma}[section]
\DeclareMathOperator{\Range}{Range}
\def\R{\mathbb{R}}
\def\H{E}
\def\N{\mathcal{N}}
\def\S{\mathcal{S}}
\def\Sph{\mathbb{S}}
\def\d{\mathrm{d}}
\def\U{\mathcal{U}}
\def\eref#1{{\rm (\ref{#1})}}
\def\qed{~\relax\ifmmode\hskip2em \Box
	\else\unskip\nobreak\hskip1em \hfill$\Box$
	\fi \newline}
\def\lchi{\chi_{\xi}^{\text{loc}}}
\def\dt#1{{\dot{#1}}}
\def\trialsp{{V_{X,\phi_m}}}
\def\IP#1#2{{\left\langle{#1},\,{#2}\right\rangle}}
\def\IPSph#1#2{{{\IP{#1}{#2}}_{L^2(\S)}}}
\def\VarDu{\frac{\delta E}{\delta u}}
\DeclareMathOperator*{\myspan}{span}
\begin{document}
	\begin{frontmatter}
		\title{Structure-preserving kernel-based meshless methods for solving dissipative PDEs on surfaces
			\tnoteref{label1}}
		\tnotetext[label1]{The work of the first author was supported by  NSFC (No. 12101310), NSF of Jiangsu Province (No. BK20210315), and the Fundamental Research Funds for the Central Universities (No. 30923010912), The work of the second author was supported by the General Research Fund (GRF No. 12301520, 12301021, 12300922) of Hong Kong Research Grant Council. The third author was supported by NSFC (No. 12001261, 12361086) and NSF of Jiangxi Province (No. 20212BAB211020).}
		
		\author{Zhengjie Sun\fnref{label2}}
		\ead{zhengjiesun@njust.edu.cn}
		\author{Leevan Ling\fnref{label3}}
		\ead{lling@hkbu.edu.hk}
		\author{Meng Chen\fnref{label4}}
		\ead{chenmeng_math@ncu.edu.cn}
		
		\address[label2]{School of Mathematics and Statistics, Nanjing University of Science and Technology, Nanjing, China}
		\address[label3]{Department of Mathematics, Hong Kong Baptist University, Kowloon Tong, Hong Kong}
		\address[label4]{School of Mathematics and Computer Sciences,
			Institute of Mathematics and Interdisciplinary Sciences, Nanchang University, Nanchang, China}



		\begin{abstract}
			In this paper, we propose a general meshless structure-preserving Galerkin method for solving dissipative PDEs on surfaces. By posing the PDE in the variational formulation and simulating the solution in the finite-dimensional approximation space spanned by (local) Lagrange functions generated with positive definite kernels, we obtain a semi-discrete Galerkin equation that inherits the energy dissipation property. The fully-discrete structure-preserving scheme is derived with the average vector field method. We provide a convergence analysis of the proposed method for the Allen-Cahn equation. The numerical experiments also verify the theoretical analysis including the convergence order and structure-preserving properties. Furthermore, we provide numerical evidence demonstrating that the Lagrange function and the coefficients generated by a restricted 
			kernel decay exponentially, even though a comprehensive theory has not yet been developed.
		\end{abstract}
		\begin{keyword}
			Kernel-based method; dissipative PDEs; structure-preserving scheme; Lagrange functions; surface PDEs.\\
			AMS Subject Classifications: 65M60, 65M12, 65D30, 41A30
		\end{keyword}

	\end{frontmatter}

	\section{Introduction}
	\label{Sec:Introduction}
	Partial differential equations (PDEs) exhibit intrinsic properties associated with the underlying geometric structures (e.g., symplecticity, multi-symplecticity, Lie group structure, symmetry, etc.) or fundamental physical principles (e.g., energy conservation, energy dissipation, momentum conservation, positivity, etc.) \cite{feng2010symplectic,hairer2006geometric}.
	Preserving these inherent features in numerical schemes is crucial, as it allows for the design of more precise and stable methods for simulating PDEs.
	Over the past several decades, extensive research has been conducted on structure-preserving numerical schemes \cite{bhatt2017structure,Bridges2,christiansen2011topics,frank2006linear,gong2014some,gong2017structure,mclachlan1993symplectic,sanz1992symplectic}. In this regard, we focus on the energy dissipation property that is particularly significant for solving dissipative PDEs on surfaces like gradient flow models. These models find extensive applications in numerous fields, such as physics and engineering, including but not limited to phase transition, multi-phase fluid flow, crystal growth, image analysis, and others. Therefore, it is of utmost importance to consider the energy dissipation feature during the numerical simulation.

	PDEs on surfaces have captured significant attention in the scientific community owing to their ability to describe numerous phenomena in applied sciences \cite{bertalmio2001variational,chaplain2001spatio,diewald2000anisotropic,dziuk2013finite,greer2006improvement,johnson1991simulation,novak2007diffusion,turk1991generating,valizadeh2019isogeometric}. However, analytically solving such PDEs on surfaces is often unfeasible due to their inherent complexity. Thus, suitable numerical methods must be developed to approximate solutions.
	There are three commonly used numerical formulations: The intrinsic formulation utilizing the intrinsic coordinates of the surface or a triangulated representation of the surface, such as the surface finite element method \cite{dziuk2013finite}. Next, the embedding formulation extends the PDE from the surface to some narrow-band domain containing the surface \cite{macdonald2010implicit}, and lastly, extrinsic formulation processing the surface problem directly on the surface using projection operators to tangent spaces \cite{chen2020extrinsic}.

	To turn these formulations into solution techniques, kernel-based methods present an attractive alternative due to their flexible and efficient meshless framework for solving PDEs on surfaces \cite{chen2020extrinsic,cheung2018kernel,fuselier2013high,kunemund2019high,legia04galerkin,le2006continuous,lehoucq2016meshless,lehto2017radial,mirzaei2018petrov,narcowich2017novel,shankar2015radial,shankar2020robust,wendland2020solving,yan2023kernel}. Their adaptive resolution, interpolation capabilities, high accuracy, ease of handling boundaries, and exemption from explicit mesh generation make them well-suited for a variety of challenging surface PDE problems.
	In \cite{narcowich2017novel}, we can find a novel Galerkin method using localized Lagrange bases that was proposed to solve elliptic PDEs on spheres, demonstrating computational advantages through parallelization. This was later extended to semilinear parabolic \cite{kunemund2019high} and nonlocal \cite{lehoucq2016meshless} PDEs on spheres. While these methods showed success on spherical domains, their applicability is limited and they may neglect inherent PDE properties. To address these limitations, this paper proposes a meshless structure-preserving Galerkin approach for general dissipative PDEs on surfaces, building upon kernel-based techniques while preserving the essential dissipation property.

	

	Throughout this article, $\S\subset \R^d$ is assumed to be a closed, compact, connected, $C^{\infty}$ surface of intrinsic dimension $d_{\S}=d-1$. Consider the function space $\U$ containing
	differentiable functions
	$u: \S\times(0,T] \to \R$.
	We deal with   PDE in the general form \cite{celledoni2012preserving,miyatake2015note,eidnes2018adaptive} of
	\begin{equation}\label{eq:EvolPDE}
		\dot{u}=\N \VarDu[u],
	\end{equation}
	for some  functional $\H:\U\to\R$ with the functional derivative ${\displaystyle\VarDu}$,
	and  linear negative (semi-) definite operator $\N$ , i.e.
	\begin{equation}\label{eq:NegDefiOp}
		\IP {u}{\N u}_{L^2(\S)}\leq 0, ~~\forall u\in \U.
	\end{equation}
	Then, PDE \eqref{eq:EvolPDE} has the following dissipation property,
	\begin{equation}\label{eq:DisspProp}
		\frac{\d \H}{\d t}=\IP{\VarDu}{\dot{u}}_{L^2(\S)}=\IP{\VarDu}{\N\VarDu}_{L^2(\S)}\leq 0.
	\end{equation}
	
	Typical examples for energy dissipative PDE \eqref{eq:EvolPDE} include the Allen-Cahn equation \eqref{eq:ACeq} and the Cahn-Hilliard equation \eqref{eq:CHeq}.
	In the literature, there exist several systematic approaches to enforce invariant properties on numerical schemes for solving dissipative PDEs \eqref{eq:EvolPDE} (mostly) on regular domains with structured nodes. In \cite{celledoni2012preserving}, the authors proposed a general dissipative method to solve \eqref{eq:EvolPDE} using the finite difference method. Later, \cite{eidnes2018adaptive} developed a partition of unity method in the Galerkin formulation with adaptive energy-conserving properties, which can also be applied to dissipative systems \cite{sun2021novelenergy}.

	The objective of this paper is to generalize our previously proposed energy-preserving meshless Galerkin method \cite{sun2022kernel} to obtain a novel meshless Galerkin method that preserves the dissipative structure of PDEs on surfaces.
	The remaining sections of this paper are organized as follows. Section \ref{sec:Prelim} provides preliminaries for the proposed meshless method, in particular, the local Lagrange functions. Section \ref{sec:MeshlessStructurePres} establishes the framework of the meshless structure-preserving Galerkin method.
	The applications of the meshless Galerkin schemes for solving the Allen-Cahn equation and the Cahn-Hilliard equation are discussed in Section \ref{sec:MeshlessEnerConser}. The convergence analysis is presented in the following section. Section \ref{sec:KenelQuad} also delves into the details of the numerical discretization for the mass matrix and stiffness matrix. Subsequently, in Section \ref{sec:NumerExp}, we report some numerical results for the Allen-Cahn equation and the Cahn-Hilliard equation on surfaces, highlighting the performance of both intrinsic and restricted kernels. Finally, the paper concludes with a summary in the last section.

	\section{Preliminaries}\label{sec:Prelim} In this section, we will briefly introduce some key ingredients in constructing our meshless Galerkin method via positive definite kernels. Let $X\subset\S$ be a set of $N_X$ distinct centers on the surface. The distance on the surface $\text{dist}(\xi,\eta)$ is the length of the shortest geodesic joining $\xi$ and $\eta$.
	The three quantities associated to the set $X$, the fill distance, separation distance, and mesh ratio are defined respectively by
	\begin{equation}\label{eq:filldistance}
		h_X:=\sup_{x\in\S}\text{dist}(x,X),~~q_X:=\frac{1}{2}\inf_{\substack{\xi,\eta\in X\\\xi\neq\eta}}\text{dist}(\xi,\eta),~~\rho_{X}:=\frac{h_X}{q_X}.
	\end{equation}
	We work with \emph{positive definite} kernels $\Phi(\xi,\eta):\S\times\S\rightarrow\R$ that reproduce Sobolev spaces in $\S$.
	One important example is the Mat\'{e}rn kernel, denoted by $\phi_m$, which is the reproducing kernel for the Sobolev space $W_2^m(\S)$. This kernel was introduced and thoroughly studied in \cite{hangelbroek-2010SIAMMA-kernel,hangelbroek2018direct}. Notably, the kernel is intrinsically defined and serves as the fundamental solution of the elliptic differential operator, $\mathcal{L}=\sum_{j=0}^{m}(\nabla_S^j)^*\nabla_S^j$ of order $2m$, where $\nabla_S$ is the covariant derivative defined on $\S$, and $(\nabla_S^j)^*$ is the adjoint of $\nabla_S^j$ with respect to the $L_2(\S)$ (see \cite{aubin2012nonlinear,hangelbroek-2010SIAMMA-kernel} for details). The theory can also be extended to conditionally positive definite kernels, extensively studied in \cite{kunemund2019high,narcowich2017novel}.
	
	

	Now, for a finite set $X\subset\S$, we define a finite-dimensional approximation space on $\S$:
	\begin{equation}\label{eq:FiniteDimSpace}
		V_X=V_{X,\phi_m}=\myspan_{x\in X  {\subset \S}}\big\{\phi_m(\,\cdot\,,x)\big\}
		{\subset W_2^{m}(\S)}.
	\end{equation}
	Here $\phi_m(\,\cdot\,,x):=\phi_m(\text{dist}(\cdot,x))$. Suppose $f:\S\rightarrow\R$ is the unknown function and $f|_X$ is the vector with nodal values of $f$ at  $X$. Then, there exists a unique interpolant $I_Xf$  {of $f\in W_2^{m}(\S)$ on $X$} from the space $V_X$ in the form of
	\begin{equation}
		I_Xf = \sum_{x\in X}a_{\xi}\phi_m(\,\cdot\,,x).
	\end{equation}
	The coefficients $a_{\xi}\in \R$ are uniquely determined by the interpolation condition $I_Xf(\eta)=f(\eta)$ for all $\eta\in X$.

	\subsection{Lagrange functions} In this section, we only consider the  Mat\'{e}rn kernel $\phi_m$. Instead of using
	the Mat\'{e}rn kernel in the standard form to span the approximation space,  $V_{X,\phi_m}\subset W_2^{m}(\S)$ can also be spanned
	using the set of \emph{Lagrange functions}, a.k.a. \emph{cardinal functions}, denoted by $\chi_{\xi}\in V_{X,\phi_m}$ for all $\xi \in X\subset \S$.
	The Lagrange function centered at any $\xi\in X$ is the unique interpolant $\chi_{\xi}\in V_{X,\phi_m}$ satisfying
	$\chi_{\xi}(\xi)=1$ and $\chi_{\xi}(\eta)=0$ for all $\eta\in X\setminus \xi$.
	Since $\chi_{\xi}\in V_{X,\phi_m}$, the Lagrange function has the representation
	\begin{equation}\label{eq:LagrangFunc}
		\chi_{\xi}(\cdot)=\sum_{\eta\in X}\alpha_{\xi,\eta}\phi_m(\cdot,\eta),
	\end{equation}
	for some coefficients $\alpha_{\xi,\eta}\in\R$.
	Consequently, the
	interpolant of $f$
	using the Lagrange functions as a basis function, which is defined by nodal values of $f$ on $X$, i.e.,
	$$I_{X,\phi_m}f=\sum_{\xi\in X}
	\alpha_\xi \chi_{\xi} \text{\quad with } \alpha_\xi=f(\xi).$$
	From \cite{hangelbroek2018direct}, we know that there exists constants $C$, $\nu$,  such that
	\begin{align}
		|\alpha_{\xi,\eta}|&\leq C q_X^{d-2 {s}}\text{exp}\Big(-\nu\frac{\text{dist}(\xi,\eta)}{h_X}\Big),
		\\
		|\chi_{\xi}(x)|&\leq C\rho_X^{ {s}-d/2}\text{exp}\Big(-2\nu\frac{\text{dist}(x,\xi)}{h_X}\Big).
	\end{align}
	
	We note that both the Lagrange functions (associated with the Mat\'{e}rn kernel) and its coefficients decay exponentially. Thus, these Lagrange functions  can be effectively approximated  only by using a few nearby kernels
	\cite{fuselier2013localized,kunemund2019high,lehoucq2016meshless,narcowich2017novel}. 
	
	\subsection{Local Lagrange functions}  Establishing the Lagrange basis for $V_{X,\phi_m}$ requires solving a large system of equations if the number of centers becomes large.
	To reduce the computation and allow parallelism, Fuselier et al. \cite{fuselier2013localized} introduced the \emph{local} Lagrange functions of surface splines that are conditionally positive definite for $V_{X,\phi_m}$, which are obtained by solving relatively small linear systems
	associated with some local stencils.
	Here, we present the positive definite case of Mat\'{e}rn kernel.

	Let $\xi\in X$ be a trial center and $X_{\xi}$ be its local neighborhood of $\xi$ containing nearest centers:
	\begin{equation}\label{eq:LocalSet}
		X_{\xi}:=\{\eta\in X:\text{dist}(\xi,\eta)\leq r_{X}\},
	\end{equation}
	within radius $r_{X}:=Kh_X|\log h_X|$ for some parameter $K$ \cite{fuselier2013localized}. The local Lagrange function is the interpolant to the reduced cardinal conditions
	$\lchi(\xi)=1$ and $\lchi(\eta)=0$ for all $\eta\in X_\xi\setminus \xi$
	in the form of
	\begin{equation}\label{eq:ConstrLocalLag}
		\lchi(\cdot)=\sum_{\eta\in X_{\xi}}\alpha_{\eta,\xi}^{\text{loc}}\phi(\cdot,\xi).
	\end{equation}
	Based on numerical results in \cite{fuselier2013localized,kunemund2019high} and confirmed with our numerical results in Section~\ref{sec:NumerExp}, using $K=7$ is sufficient to obtain accuracy of the global counterpart.

	\subsection{Projection operator} We define an $L^2$-orthogonal projection operator 
	$\mathcal{P}: L^2(\S) \rightarrow  V_{X,\phi_m}$
	for any $f\in L^2(\S)$ by
	\begin{equation}\label{eq:ProjOper}
		\IPSph {\mathcal{P}f}{v}=\IPSph{f}{v},~~\forall v\in V_{X,\phi_m}.
	\end{equation}
	We have established  convergence estimates for $\|\mathcal{P}f-f\|$ in the Euclidean space \cite[Lem 2.2]{sun2022kernel}. For  closed and smooth surfaces, we have the following:
	\begin{lemma}\label{thm:ProjOper}
		Let $I$ be a bounded interval of $\mathbb{R}$. Assume that a  smooth  function $f:I\rightarrow\mathbb{R}$ satisfies $f(0)=0$. For any trial function $\chi\in V_{X,\phi_m}$ with $\Range(\chi)\subseteq I$, it holds that
		$$\|\mathcal{P}f(\chi)-f(\chi)\|_{L^2(\S)}\leq C h_X^{m}(1+\|\chi\|_{L^{\infty}(\S)})^m\|\chi\|_{H^m(\S)}.
		$$
	\end{lemma}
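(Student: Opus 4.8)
The plan is to reduce the projection error to an interpolation error via the best-approximation property of the orthogonal projection, then to invoke the standard kernel interpolation bound in the native space $W_2^m(\S)$, and finally to control the resulting Sobolev norm of the composition $f(\chi)$ by a Moser-type (composition) estimate. Since $\mathcal{P}$ is the $L^2(\S)$-orthogonal projection onto $V_{X,\phi_m}$ defined in \eqref{eq:ProjOper}, the image $\mathcal{P}g$ is the best $L^2(\S)$-approximation to $g$ from $V_{X,\phi_m}$; hence for \emph{any} $v\in V_{X,\phi_m}$ we have $\|\mathcal{P}g-g\|_{L^2(\S)}\le\|v-g\|_{L^2(\S)}$. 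Taking $g=f(\chi)$ and choosing $v=I_{X,\phi_m}(f(\chi))$, the kernel interpolant of $f(\chi)$, yields
$$\|\mathcal{P}f(\chi)-f(\chi)\|_{L^2(\S)}\le \|I_{X,\phi_m}(f(\chi))-f(\chi)\|_{L^2(\S)}.$$
Because $\phi_m$ reproduces $W_2^m(\S)$, the standard zeros-lemma / sampling-inequality interpolation estimate in the native space gives $\|I_{X,\phi_m}g-g\|_{L^2(\S)}\le C h_X^m\|g\|_{H^m(\S)}$ for $g\in H^m(\S)$ (this reproduces the surface analogue of \cite[Lem 2.2]{sun2022kernel}). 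It therefore remains to establish the composition bound $\|f(\chi)\|_{H^m(\S)}\le C(1+\|\chi\|_{L^\infty(\S)})^m\|\chi\|_{H^m(\S)}$.

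For the composition bound I would differentiate $f(\chi)$ with the Fa\`a di Bruno formula for covariant derivatives: for each $0\le k\le m$,
$$\nabla_S^k\big(f(\chi)\big)=\sum_{j=1}^{k}f^{(j)}(\chi)\sum_{\substack{k_1+\cdots+k_j=k\\ k_i\ge 1}} c_{k_1,\dots,k_j}\,\nabla_S^{k_1}\chi\odot\cdots\odot\nabla_S^{k_j}\chi,$$
so every term is a bounded factor $f^{(j)}(\chi)$ (bounded because $\Range(\chi)\subseteq I$ and $f\in C^\infty$, so $\|f^{(j)}(\chi)\|_{L^\infty(\S)}\le\|f^{(j)}\|_{L^\infty(I)}$) times a product of $j$ covariant derivatives of $\chi$ whose orders sum to $k$. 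Each such product is estimated by the multiplicative Gagliardo--Nirenberg inequality on the compact surface $\S$: with H\"older exponents $p_i=2k/k_i$ and interpolation between $L^\infty(\S)$ and $H^m(\S)$,
$$\Big\|\prod_{i=1}^{j}\nabla_S^{k_i}\chi\Big\|_{L^2(\S)}\le C\,\|\chi\|_{L^\infty(\S)}^{\,j-1}\,\|\chi\|_{H^m(\S)}.$$
Summing the finitely many terms and using $\sum_{j=1}^{k}\|\chi\|_{L^\infty(\S)}^{j-1}\le C(1+\|\chi\|_{L^\infty(\S)})^{k-1}$ gives $\|\nabla_S^k(f(\chi))\|_{L^2(\S)}\le C(1+\|\chi\|_{L^\infty(\S)})^{k-1}\|\chi\|_{H^m(\S)}$ for $1\le k\le m$. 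For $k=0$ the hypothesis $f(0)=0$ gives $|f(\chi)|\le\|f'\|_{L^\infty(I)}|\chi|$ and hence $\|f(\chi)\|_{L^2(\S)}\le C\|\chi\|_{L^2(\S)}\le C\|\chi\|_{H^m(\S)}$. Summing over $0\le k\le m$ yields $\|f(\chi)\|_{H^m(\S)}\le C(1+\|\chi\|_{L^\infty(\S)})^{m-1}\|\chi\|_{H^m(\S)}$, which is dominated by the claimed power; combining with the interpolation bound finishes the proof.

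The main obstacle is the composition estimate: the factor $(1+\|\chi\|_{L^\infty(\S)})^m$ is precisely what the Fa\`a di Bruno expansion forces, since the term with $j$ derivative factors carries the power $\|\chi\|_{L^\infty(\S)}^{j-1}$ coming from Gagliardo--Nirenberg, and $j$ ranges up to $m$. Care is needed to apply the Gagliardo--Nirenberg inequalities in their manifold form on the closed smooth surface $\S$, where the full Sobolev norm controls the intermediate interpolation and the arguments may be carried out for general $\chi\in H^m(\S)$ by density rather than pointwise. One must also ensure $f(\chi)\in H^m(\S)$ in the first place, which is guaranteed by this very estimate together with the embedding $W_2^m(\S)\hookrightarrow C(\S)$ (valid once $m>d_{\S}/2$) that makes $\|\chi\|_{L^\infty(\S)}$ finite; the role of $f(0)=0$ is solely to control the zeroth-order $L^2$ term without an additive constant.
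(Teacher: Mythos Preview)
Your proposal is correct and follows essentially the same approach as the paper, which simply defers to \cite[Lem.~2.2]{sun2022kernel} and notes that the flat-geometry argument carries over to~$\S$; you have supplied the details that proof contains, namely reducing the projection error to the interpolation error via best approximation, applying the native-space sampling estimate $\|I_{X,\phi_m}g-g\|_{L^2(\S)}\le C h_X^m\|g\|_{H^m(\S)}$, and then bounding $\|f(\chi)\|_{H^m(\S)}$ by the Moser composition estimate (Fa\`a di Bruno plus Gagliardo--Nirenberg). Your remark that the argument actually yields the slightly sharper exponent $m-1$ in place of $m$ on the $(1+\|\chi\|_{L^\infty})$ factor is a fair observation and does not conflict with the stated bound.
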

	\begin{proof}
		Our previous proof in \cite[Lem 2.2]{sun2022kernel} for flat geometry also works on $\S$ by substituting  $f(\chi)\in H^m(\S)$ in places of $F'(\chi)$ in the original proof.
	\end{proof}
	
	\section{Kernel-based structure-preserving Galerkin method for dissipative PDEs}
	\label{sec:MeshlessStructurePres}
	Using the Lagrange basis functions,
	Narcowich et al. \cite{narcowich2017novel} developed a novel Galerkin method for solving elliptic equations on the sphere. Afterward, this method was applied to solve semilinear parabolic equations \cite{kunemund2019high} and non-local diffusion equations \cite{lehoucq2016meshless}. However, these methods all focused on the sphere by using restricted surface splines and cannot easily applied to other surfaces. Besides, the inherent properties of time-dependent PDEs are not considered. Therefore, in this section, we will utilize the novel meshless Galerkin method to construct structure-preserving schemes for dissipative PDEs with inherent physical properties \eqref{eq:EvolPDE} defined on general surfaces.
	
	\subsection{Spatial discretization} We will follow the main idea from \cite{eidnes2018adaptive} but with the meshless Galerkin method and on surfaces. We seek the weak solution to \eqref{eq:EvolPDE} by solving
	\begin{equation}\label{eq:WeakSolu}
		\IPSph{\dot{u}}{v}=\IPSph{\N\VarDu}{v},~~\forall v\in\U.
	\end{equation}
	Be reminded that $\N$ is a self-adjoint negative (semi-)definite operator. By approximating the unknown function $u\in\U$ with $u_h \in \trialsp$ in the finite-dimensional space with time-dependent coefficients, i.e.,
	\begin{equation}\label{alphadef}
		u(t,\cdot) \approx u_h(t,\cdot) = \sum_{\xi\in X}\alpha_{\xi}(t)\chi_{\xi}(\cdot),
	\end{equation}
	we arrive at the following Galerkin equation
	\begin{equation}\label{stdGalerkin}
		\IPSph{\dot{u}_h}{v_h}=\IPSph{\N\VarDu[u_h]}{v_h}=\IPSph{\VarDu[u_h]}{\N v_h},~~\forall v_h\in\trialsp,
	\end{equation}
	which does not preserve the dissipation structure.
	
	To obtain a dissipative scheme, we project the nonlinear function $\VarDu[u_h]$ onto the approximation function space $\trialsp$ in the sense of \eqref{eq:ProjOper}, i.e.
	\begin{equation}\label{PdEuh}
		\mathcal{P}\Big(\VarDu[u_h]\Big)(t,\cdot):=\sum_{\xi\in X}\lambda_{\H,\xi}(t)\chi_{\xi}(\cdot).
	\end{equation}
	The coefficients $\lambda_{\H,\xi}$ are determined by using definition \eref{eq:ProjOper} of the projection and the variational derivative. For all  $\chi_{\eta}\in\trialsp$, we want the followings hold:
	\begin{equation*}
		\begin{aligned}
			&
			\left\langle\sum_{\xi\in X}\lambda_{\H,\xi}(t)\chi_{\xi},\chi_{\eta}\right\rangle=
			\big\langle   \VarDu[u_h],\chi_{\eta}\big\rangle=
			(\nabla_{\alpha}\H[u_h])^T\chi_{\eta}.
		\end{aligned}
	\end{equation*}
	where $\nabla_{\alpha}$ represents the gradient with respect to the coefficients $\alpha$.
	Therefore, the coefficients in \eref{PdEuh} is given by
	\begin{equation}\label{eq:MassMatrix}
		\lambda_{\H}= \bigg( \underbrace{\Big[\IPSph{\chi_{\xi}}{\chi_{\eta}}\Big]_{\xi,\eta\in X} }_{=:A\in\R^{N_X\times N_X}} \bigg)
		^{-1}(\nabla_{\alpha}\H[u_h]):= A^{-1}  (\nabla_{\alpha}\H[u_h]) .
	\end{equation}
	Replacing the variational derivative $\displaystyle\VarDu[u_h]$ by its projection in the Galerkin equation \eref{stdGalerkin} yields a new equation:
	\begin{equation}\label{newGalerkin}
		\sum_{\xi\in X}\dot{\alpha}_{\xi}\IPSph{\chi_{\xi}}{\chi_{\eta}}=\sum_{\xi\in X}\lambda_{\H,\xi}\IPSph{\chi_{\xi}}{\N \chi_{\eta}},~~\forall \chi_{\eta}\in\trialsp\subset\U.
	\end{equation}
	By letting
	$D=\Big(\IPSph{\chi_{\xi}}{\N\chi_{\eta}}\Big)$,
	we have the semi-discrete ODE system
	\begin{equation}\label{eq:EvolPDESemiDis}
		\dot{\alpha} =A^{-1}D\lambda_{\H}=A^{-1}DA^{-1}(\nabla_{\alpha}\H[u_h]).
	\end{equation}
	We emphasize that the matrices $A$ and $D$ are both symmetric. Moreover, the semi-discrete ODE system \eqref{eq:EvolPDESemiDis} has a dissipative energy $\H[u_h]$.
	
	\begin{theorem}\label{thm:DissipationPropSemiDisc}
		The semi-discrete system \eqref{eq:EvolPDESemiDis} satisfies the dissipation property, $\frac{d\H[u_h]}{dt}\leq 0$.
	\end{theorem}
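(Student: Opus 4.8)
The plan is to differentiate the discrete energy $\H[u_h]$ along solutions of the semi-discrete system \eqref{eq:EvolPDESemiDis} and to recognize the resulting expression as a single quadratic form governed by $D$, which the negative semi-definiteness \eqref{eq:NegDefiOp} of $\N$ will then control. First I would regard $\H[u_h]$ as depending on time only through the coefficient vector $\alpha(t)$ via $u_h=\sum_{\xi\in X}\alpha_\xi\chi_\xi$, so that the chain rule gives $\frac{\d\H[u_h]}{\d t}=(\nabla_\alpha\H[u_h])^T\dot\alpha$. Substituting the right-hand side of \eqref{eq:EvolPDESemiDis} then yields $\frac{\d\H[u_h]}{\d t}=(\nabla_\alpha\H[u_h])^T A^{-1}DA^{-1}(\nabla_\alpha\H[u_h])$.

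Next I would exploit the symmetry of $A$ (hence of $A^{-1}$) to collapse this into one quadratic form. Writing $\lambda_{\H}=A^{-1}(\nabla_\alpha\H[u_h])$ as in \eqref{eq:MassMatrix}, symmetry gives $(\nabla_\alpha\H[u_h])^T A^{-1}=\lambda_{\H}^T$, whence $\frac{\d\H[u_h]}{\d t}=\lambda_{\H}^T D\,\lambda_{\H}$. The key observation is that, by the definition $D=(\IPSph{\chi_\xi}{\N\chi_\eta})$ and bilinearity of the inner product, $\lambda_{\H}^T D\,\lambda_{\H}=\IPSph{\sum_{\xi\in X}\lambda_{\H,\xi}\chi_\xi}{\N\sum_{\eta\in X}\lambda_{\H,\eta}\chi_\eta}$, and the function $\sum_{\xi\in X}\lambda_{\H,\xi}\chi_\xi$ is precisely the projection $\mathcal{P}(\VarDu[u_h])\in\trialsp$ introduced in \eqref{PdEuh}.

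Finally, setting $w_h:=\mathcal{P}(\VarDu[u_h])\in\trialsp\subset\U$, the two steps combine to give $\frac{\d\H[u_h]}{\d t}=\IPSph{w_h}{\N w_h}$. Since $w_h\in\U$, the negative semi-definiteness assumption \eqref{eq:NegDefiOp} applies verbatim and yields $\IPSph{w_h}{\N w_h}\le 0$, completing the proof. The argument is essentially a short piece of linear algebra, so I do not expect a serious obstacle; the only point requiring care is ensuring that the quadratic form $\lambda_{\H}^TD\lambda_{\H}$ is realized as $\IPSph{w_h}{\N w_h}$ for a genuine element $w_h$ of the trial space, so that \eqref{eq:NegDefiOp} becomes applicable. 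This relies on the symmetry of both $A$ and $D$ (already noted after \eqref{eq:EvolPDESemiDis}) and on the identification of $\nabla_\alpha\H[u_h]$ with the pairing $\langle\VarDu[u_h],\chi_\eta\rangle$ used to define $\lambda_{\H}$.
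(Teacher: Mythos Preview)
Your proposal is correct and follows essentially the same route as the paper: differentiate $\H[u_h]$ via the chain rule, substitute \eqref{eq:EvolPDESemiDis}, and use the symmetry of $A$ to reduce to a quadratic form in $D$. The only cosmetic difference is that the paper simply asserts that $D$ is negative (semi-)definite as an immediate consequence of \eqref{eq:NegDefiOp}, whereas you spell out this step by explicitly identifying $\lambda_{\H}^T D\lambda_{\H}$ with $\IPSph{w_h}{\N w_h}$ for $w_h=\mathcal{P}(\VarDu[u_h])$; this is a helpful clarification but not a different argument.
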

	\begin{proof}
		Since $\N$ is a self-adjoint negative (semi-)definite linear operator,
		we can easily obtain that $D$ is a negative (semi-)definite matrix and
		\begin{equation}
			\begin{aligned}
				\frac{d\H[u_h]}{dt}=&\IPSph{\nabla_{\alpha}\H[u_h]}{\dot{\alpha}}=\IPSph{\nabla_{\alpha}\H[u_h]}{A^{-1}DA^{-1}\nabla_{\alpha}\H[u_h]}\\
				=&\IPSph{A^{-1}\nabla_{\alpha}\H[u_h]}{DA^{-1}\nabla_{\alpha}\H[u_h]}\leq 0.
			\end{aligned}
		\end{equation}
	\end{proof}
	
	

	\subsection{Time discretization}
	We now have a meshless Galerkin equation \eqref{newGalerkin} that preserves the dissipative structure of PDEs in the semi-discrete (spatial) discretization. To obtain a fully discrete scheme, it remains to introduce an appropriate time integrator that possess structure-preserving properties for dissipative systems. In particular, the average vector field (AVF) method \cite{quispel2008new} has been shown to automatically preserve important invariants like the energy \cite{cai2016numerical,celledoni2010energy,celledoni2014minimal,cieslinski2014improving}  and the dissipative property \cite{celledoni2012preserving} for ODE and PDE systems.
	
	Consider a first-order ODE system in the general form of
	$$\frac{dy}{dt}=f(y),$$
	for some Lipschitz continuous function $f:\R\to\R$.
	We partition the time interval $[0,T]$ into $N_t$ sub-intervals with the time step $\triangle t=T/N_t$. Let $y^n$ be the approximation of $y(t_n)$ at $t_n=n\triangle t$. Then,  the second-order AVF scheme is given by
	\begin{equation}\label{AVF}
		\frac{y^{n+1}-y^n}{\triangle t}=\int_{0}^1f((1-\xi)y^n+\xi y^{n+1})\d\xi.
	\end{equation}
	Now, applying AVF method to the semi-discrete system \eqref{eq:EvolPDESemiDis} and let $u_h^n$ be the approximation of $u^n$ with $\alpha^n=\alpha(t_n)$, we obtain a full-discrete dissipative scheme
	\begin{equation}\label{eq:FullDisScheme}
		\frac{\alpha^{n+1}-\alpha^n}{\triangle t} = A^{-1}DA^{-1}\int_{0}^1 \nabla_{\alpha}\H[(1-\xi)u_h^n+\xi u_h^{n+1}]\d\xi,
	\end{equation}
	to the equation \eqref{eq:EvolPDE}.
	
	Note in \eqref{AVF} and \eqref{eq:FullDisScheme} that the vector field function takes the form of  $f(y)=\overline{\N}\nabla _y\overline{\H}(y)$ with a negative-definite matrix $\overline{\N}$.
	This implies $\overline{\H}$ is a Lyapunov function of this ODE and the dissipation/decay of $\overline{\H}(y)$ over time. The AVF method is designed to preserve this dissipative structure \cite{celledoni2010energy,celledoni2012preserving}, i.e.
	$$\overline{\H}(y^{n+1})\leq \overline{\H}(y^n).$$
	Up to now, we have presented a general framework of kernel-based structure-preserving schemes for dissipative PDEs. In the following, we will explain our algorithms through specific equations like the Allen-Cahn and the Cahn-Hilliard equation.

	\section{Fully-discrete, and dissipative schemes for the Allen-Cahn and the Cahn-Hilliard equation}
	\label{sec:MeshlessEnerConser}
	
	It is well-known that the Allen-Cahn equation can be formulated as an $L^2$ gradient flow \cite{dziuk2013finite,xiao2017stabilized}, where the time derivative of the solution is the $L^2$ gradient of a Lyapunov energy functional. The Cahn-Hilliard equation can be formulated analogously as an $H^{-1}$ gradient flow.
	Their Lyapunov functional, which measures the free energy driving the phase separation/domain growth process, is defined as follows:
	\begin{equation}\label{eq:ACeq_energy}
		\H[u;G]=\int_{\S}\left[\frac{\varepsilon^2}{2}|\nabla_{\S} u|^2+G(u)\right]\d \mu,
	\end{equation}
	with the Ginzburg-Landau free energy density $G(u)=\frac{1}{4}(u^2-1)^2$ and the interface width $\varepsilon$.
	
	By employing the energy function $\H$ in \eqref{eq:ACeq_energy} and taking $\N=-I$, the PDE \eqref{eq:EvolPDE} becomes  the Allen-Cahn equation
	\begin{equation}\label{eq:ACeq}
		\dt{u}(x,t)-\varepsilon^2\Delta_{\S} u(x,t)+G'\big(u(x,t)\big)=0, ~~~(x,t)\in \S\times(0,T],
	\end{equation}
	and the counterpart to the Cahn-Hilliard equation  with $\N=\Delta_{\S}$
	is given as
	\begin{equation}\label{eq:CHeq}
		\dt{u}(x,t)-\Delta_{\S}(-\varepsilon^2\Delta_{\S} u(x,t)+G'(u(x,t))=0, ~~~(x,t)\in \S\times(0,T],
	\end{equation}
	with a Laplace-Beltrami operator $\Delta_{\S}$ defined on $\S$ and a nonlinear term described by a smooth real-valued function $G':\mathbb{R}\rightarrow\mathbb{R}$.
	One notable characteristic of gradient flow models is the dissipative nature of the total energy, i.e.,
	$\frac{d}{dt} E[u;G]\leq 0$.
	
	Both \eqref{eq:ACeq} and \eqref{eq:CHeq} are closely associated with the Lyapunov functional. In accordance with the methodology proposed in Section \ref{sec:MeshlessStructurePres}, our primary objective is to discretize the energy functional \eqref{eq:ACeq_energy}.  By employing the kernel-based numerical solution
	$u_h=\displaystyle\sum_{\xi\in X}\alpha_{\xi}\chi_{\xi}$
	in terms of Lagrange basis functions in the finite-dimensional space $V_{X,\phi_m}$, we obtain
	\begin{equation}\label{eq:ACSemiDisEqEnergy}
		\H_d[\alpha;G] = \frac{\varepsilon^2}{2}\alpha^T
		\underbrace{\Big[\IPSph{\nabla_{\S} \chi_{\xi}}{\nabla_{\S}\chi_{\eta}}\Big]_{\xi,\eta\in X}}_{=:B \in \R^{N_X\times N_X}}
		\alpha + \IPSph{G(u_h)}{1}.
	\end{equation}
	%
	Here, $E_d$ represents the discrete version of the continuous energy $E$. For the sake of evaluating \eref{eq:FullDisScheme}, we explicitly write  the gradient of $\H_d[\alpha;G]$ with respect to the vector of nodal solution values $\alpha = [\alpha_\eta]_{\eta\in X} \in \R^{N_X}$ as
	$$\nabla_{\alpha} \H_d[\alpha;G] = \varepsilon^2B\alpha+
	\underbrace{\Big[\IPSph{ {G'}(u_h)}{\chi_{\eta}}\Big]_{\eta\in X}}_{=:\lambda_{G} \in \R^{N_X}},$$
	where $\lambda_{G}$ is a vector of length $N_X$.
	{The essential step in deriving dissipative schemes involves incorporating the semi-discrete (in space) energy, as defined in Equation \eqref{eq:ACSemiDisEqEnergy}, into the fully-discrete scheme \eqref{eq:FullDisScheme}. By employing the technique we developed in \cite{sun2022kernel}, we rewrite the integral in \eqref{eq:FullDisScheme} as follows:}
	\begin{equation}\label{int discret energy}
		\begin{aligned}
			&\int_{0}^1 \nabla_{\alpha}E_d[(1-\xi)u_h^n+\xi u_h^{n+1}]\d\xi\\
			~&=\int_{0}^1 \Big[\varepsilon^2 B\Big((1-\xi)\alpha^n+\xi \alpha^{n+1}\Big)+ {\Big(\IPSph{ {G'}((1-\xi)u_h^n+\xi u_h^{n+1})}{\chi_{\eta}}\Big)_{\eta\in X}} \Big]\d\xi\\
			~&=\varepsilon^2 B\frac{\alpha^n+\alpha^{n+1}}{2}
			+ \underbrace{\left[ \IPSph{\frac{G(u_h^{n+1})-G(u_h^n)}{u_h^{n+1}-u_h^n}}
				{\chi_{\eta}}\right]_{\eta\in X} }_{=:\tilde{\lambda}_{ {G}}^{n+1} \in \R^{N_X}}.
		\end{aligned}
	\end{equation}
	
	With the above discretization set up and by using the definition of $D$ and $\N=-I$, we have
	\begin{equation}
		D=\Big[\IPSph{\chi_{\xi}}{\N\chi_{\eta}}\Big]_{\xi,\eta\in X}=-A,
	\end{equation}
	and thus obtain the following fully-discrete  scheme for the Allen-Cahn equation  in \eqref{eq:FullDisScheme}:
	\begin{equation}\label{eq:ACfullDiscretization}
		\frac{\alpha^{n+1}-\alpha^n}{\triangle t}=-\varepsilon^2 A^{-1}B\frac{\alpha^{n}+\alpha^{n+1}}{2}-A^{-1}\tilde{\lambda}_{ {G}}^{n+1}.
	\end{equation}
	Similarly, the operator $\N=\Delta_{\S}$ leads to
	\begin{equation}
		D=\Big[\IPSph{\chi_{\xi}}{\N\chi_{\eta}}\Big]_{\xi,\eta\in X}=-B,
	\end{equation}
	and the fully-discrete scheme for the Cahn-Hilliard equation is given by:
	\begin{equation}\label{eq:CHfullDiscretization}
		\frac{\alpha^{n+1}-\alpha^n}{\triangle t}=-\varepsilon^2 A^{-1}BA^{-1}B\frac{\alpha^{n}+\alpha^{n+1}}{2}-A^{-1}BA^{-1}\tilde{\lambda}_{ {G}}^{n+1}.
	\end{equation}
	
	The following properties can be proved by using  \ref{thm:DissipationPropSemiDisc} and the results in \cite[Eqn. 23]{celledoni2012preserving}, thus we omit the proof here.
	\begin{theorem}\label{thm:FullDiscrete}
		The  {fully-discrete}  energy of the schemes \eqref{eq:ACfullDiscretization} and \eqref{eq:CHfullDiscretization} is non-increasing. Let $\H_{d,G}^n = \frac{\varepsilon^2}{2}(\alpha^n)^T B \alpha^n + \IPSph{G(u_h^n)}{1}$. Then, we have
		\begin{equation}\label{eq:ACFullDisEqEnergy}
			\H_{d,G}^n\leq \H_{d,G}^{n-1}, 
		\end{equation}
		for all $1\leq n\leq N_t$.
	\end{theorem}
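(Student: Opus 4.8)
The plan is to reduce both fully-discrete schemes to the single abstract update $\frac{\alpha^{n+1}-\alpha^n}{\triangle t} = A^{-1}DA^{-1}\,\mathcal{G}^{n+1}$, where $\mathcal{G}^{n+1}$ denotes the AVF-averaged gradient $\int_0^1 \nabla_\alpha \H_d[(1-\xi)u_h^n + \xi u_h^{n+1}]\d\xi$ already evaluated in \eqref{int discret energy}, and then to exploit the negative semi-definiteness of $A^{-1}DA^{-1}$ exactly as in Theorem~\ref{thm:DissipationPropSemiDisc}. For the Allen-Cahn scheme \eqref{eq:ACfullDiscretization} we have $D=-A$, so $\mathcal{G}^{n+1}=\varepsilon^2 B\frac{\alpha^n+\alpha^{n+1}}{2}+\tilde{\lambda}_G^{n+1}$ and the update reads $\alpha^{n+1}-\alpha^n=-\triangle t\, A^{-1}\mathcal{G}^{n+1}$; for the Cahn-Hilliard scheme \eqref{eq:CHfullDiscretization} we take $D=-B$ and the same $\mathcal{G}^{n+1}$, so that $A^{-1}DA^{-1}=-A^{-1}BA^{-1}$.

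The central identity I would establish is a \emph{discrete chain rule}: $\H_{d,G}^{n+1}-\H_{d,G}^n = (\alpha^{n+1}-\alpha^n)^T\mathcal{G}^{n+1}$. First I would treat the quadratic part. Using the symmetry of $B$, $(\alpha^{n+1}-\alpha^n)^T B\frac{\alpha^n+\alpha^{n+1}}{2} = \tfrac12[(\alpha^{n+1})^TB\alpha^{n+1}-(\alpha^n)^TB\alpha^n]$, which reproduces the difference of the first term of $\H_{d,G}$. For the nonlinear part I would expand $(\alpha^{n+1}-\alpha^n)^T\tilde{\lambda}_G^{n+1}$ componentwise and use $\sum_{\eta\in X}(\alpha_\eta^{n+1}-\alpha_\eta^n)\chi_\eta = u_h^{n+1}-u_h^n$ to rewrite it as $\IPSph{\frac{G(u_h^{n+1})-G(u_h^n)}{u_h^{n+1}-u_h^n}\,(u_h^{n+1}-u_h^n)}{1}=\IPSph{G(u_h^{n+1})-G(u_h^n)}{1}$, which is precisely the difference of the second term. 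This telescoping is where the specific AVF average $\frac{G(u_h^{n+1})-G(u_h^n)}{u_h^{n+1}-u_h^n}$ in \eqref{int discret energy} is essential, and it is the step I expect to require the most care.

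Once the discrete chain rule is in hand, I would substitute the abstract update to obtain $\H_{d,G}^{n+1}-\H_{d,G}^n = \triangle t\,(\mathcal{G}^{n+1})^T A^{-1}DA^{-1}\mathcal{G}^{n+1}$, using that $A^{-1}DA^{-1}$ is symmetric. Since $\N$ is self-adjoint and negative semi-definite, $D$ is negative semi-definite, hence $A^{-1}DA^{-1}=(A^{-1})^TDA^{-1}$ is negative semi-definite by congruence; therefore the right-hand side is $\leq 0$, giving $\H_{d,G}^{n+1}\leq\H_{d,G}^n$ for every $n$, and iterating from $1$ to $N_t$ yields the stated monotonicity. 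For Allen-Cahn this reads equivalently as $-\tfrac{1}{\triangle t}(\alpha^{n+1}-\alpha^n)^TA(\alpha^{n+1}-\alpha^n)\le 0$ by positive definiteness of the mass matrix $A$. The only genuine subtlety beyond bookkeeping is the well-definedness of the difference quotient when $u_h^{n+1}=u_h^n$ pointwise, which I would resolve by interpreting it as $\int_0^1 G'((1-\xi)u_h^n+\xi u_h^{n+1})\d\xi$, matching exactly the integrand that produced $\tilde{\lambda}_G^{n+1}$.
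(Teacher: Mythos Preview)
Your proof is correct and follows precisely the argument the paper defers to: the paper omits the proof and simply cites Theorem~\ref{thm:DissipationPropSemiDisc} together with the AVF dissipation identity in \cite[Eqn.~23]{celledoni2012preserving}, and your discrete chain rule $\H_{d,G}^{n+1}-\H_{d,G}^n=(\alpha^{n+1}-\alpha^n)^T\mathcal{G}^{n+1}$ combined with the congruence argument for $A^{-1}DA^{-1}$ is exactly that identity specialized to the present setting. In other words, you have written out what the paper leaves to the cited reference, with no divergence in strategy.
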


	\subsection{ {Error estimate for the Allen-Cahn equation}}
	The convergence analysis of the semilinear wave equation on a bounded domain in $\R^{d}$ for spatial discretization was previously established in \cite{sun2022kernel}, focusing on compactly supported solutions to circumvent boundary conditions. By employing a similar approach, we can extend the existing convergence results to the Allen-Cahn equation on closed, smooth surfaces.
	The estimate of the Cahn-Hilliard equation can be handled similarly but needs more regularities on the kernel $\phi_{m}$.
	The error associated with semi-discretization is detailed in the subsequent theorem.
	\begin{theorem}\label{thm:SpatialErr}
		Let $u\in C([0,T];H^{\sigma}(\S))$ be the exact solution to the Allen-Cahn equation \eqref{eq:ACeq}. Suppose that $f:\R\to\R$ is Lipschitz continuous on a domain $\tilde{I}$ that contains $\text{Range}(u)$ as a proper subset. Let $\phi_m$ be the Mat\'{e}rn kernel and $\trialsp$ be the associated approximation space \eqref{eq:FiniteDimSpace} with the fill distance $h_X=h$. Let $u_h(t)=\sum_{\xi}\alpha_{\xi}(t)\chi_{\xi}\in \trialsp$ and $\alpha$ be the solution of equation \eqref{eq:EvolPDESemiDis} for the Allen-Cahn equation.
		If the initial conditions are approximated with the following accuracy,
		\begin{equation}\label{eq:ACInitErr}
			\|u(0)-u_h(0)\|_{H^j(\S)}\leq C h^{\sigma-j},~~j=0,1,
		\end{equation}
		then for $m\geq \sigma >d_{\S}/2$, the semi-discrete approximation solution $u_h(t)$
		in the form of \eqref{alphadef} with coefficients by \eqref{eq:ACfullDiscretization}
		converges to $u(t)$ with the error bound
		\begin{equation}\label{eq:ACSpatErr}
			\|u(t)-u_h(t)\|_{H^j(\S)}\leq Ch^{\sigma-j}\|u\|_{H^{\sigma}(\S)},~~j=0,1.
		\end{equation}
	\end{theorem}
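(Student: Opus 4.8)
The plan is to recast the semi-discrete system \eqref{eq:EvolPDESemiDis} as a conforming Galerkin problem, split the error with a projection onto $\trialsp$, and then close the estimate with an energy argument and Gronwall's inequality; the surface-kernel approximation theory supplies the spatial rates, while a continuation argument tames the (only locally Lipschitz) nonlinearity.

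First I would rewrite \eqref{eq:EvolPDESemiDis} in the Allen--Cahn case. Since $\N=-I$ forces $D=-A$, the system becomes $A\dot\alpha=-(\varepsilon^2 B\alpha+\lambda_G)$; reading this componentwise against $\chi_\eta$ and integrating the surface Laplacian by parts (no boundary on the closed $\S$) shows that $u_h$ from \eqref{alphadef} is precisely the conforming Galerkin solution of $\IPSph{\dot u_h}{v_h}+\varepsilon^2\IPSph{\nabla_{\S}u_h}{\nabla_{\S}v_h}+\IPSph{G'(u_h)}{v_h}=0$ for all $v_h\in\trialsp$, whereas the exact $u$ satisfies the same identity for all $v\in\U$. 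Subtracting these, restricted to $v_h\in\trialsp$, gives the error equation. I would split $u-u_h=\rho+\theta$ with $\rho=u-R_h u$ and $\theta=R_h u-u_h\in\trialsp$, taking $R_h$ to be the elliptic projection associated with the coercive form $a(w,v)=\varepsilon^2\IPSph{\nabla_{\S}w}{\nabla_{\S}v}+\IPSph{w}{v}$, so that the principal part of $\rho$ is $a$-orthogonal to $\trialsp$. The native-space/sampling estimates for the Mat\'ern kernel (valid because $m\ge\sigma$, with $\sigma>d_{\S}/2$ guaranteeing bounded point evaluation) then yield $\|\rho\|_{H^j(\S)}\le Ch^{\sigma-j}\|u\|_{H^{\sigma}(\S)}$ for $j=0,1$.

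Next I would test the error equation with $v_h=\theta$. The $a$-orthogonality of $R_h$ removes the gradient of $\rho$, leaving $\tfrac12\tfrac{d}{dt}\|\theta\|_{L^2(\S)}^2+\varepsilon^2\|\nabla_{\S}\theta\|_{L^2(\S)}^2=-\IPSph{\dot\rho}{\theta}+\IPSph{\rho}{\theta}-\IPSph{G'(u)-G'(u_h)}{\theta}$. The nonlinear mismatch is bounded by the Lipschitz hypothesis on $f=G'$: $|\IPSph{G'(u)-G'(u_h)}{\theta}|\le L\|u-u_h\|_{L^2(\S)}\|\theta\|_{L^2(\S)}\le L(\|\rho\|_{L^2(\S)}+\|\theta\|_{L^2(\S)})\|\theta\|_{L^2(\S)}$, invoking Lemma~\ref{thm:ProjOper} to control the consistency error committed in projecting $f(u_h)$ onto $\trialsp$, exactly as in \cite{sun2022kernel}. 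Applying Young's inequality, discarding the nonnegative $\varepsilon^2\|\nabla_{\S}\theta\|^2$, inserting the $\rho$-bounds (together with the temporal regularity of $\dot u$ needed to estimate $\|\dot\rho\|_{L^2(\S)}$, available by parabolic smoothing), and integrating via Gronwall's inequality yields $\|\theta(t)\|_{L^2(\S)}\le Ch^{\sigma}\|u\|_{C([0,T];H^{\sigma}(\S))}$. The triangle inequality then closes the $j=0$ bound, and the $j=1$ bound follows from $\|\rho\|_{H^1(\S)}\le Ch^{\sigma-1}\|u\|_{H^{\sigma}(\S)}$ together with the inverse inequality $\|\theta\|_{H^1(\S)}\le Ch^{-1}\|\theta\|_{L^2(\S)}$ on $\trialsp$.

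I expect the main obstacle to be the nonlinearity. Since $G'(u)=u^3-u$ is only locally Lipschitz, the constant $L$ above is legitimate only while $\text{Range}(u_h(t))$ stays inside the domain $\tilde I$ on which $f$ is assumed Lipschitz, so the estimate is a priori conditional. I would therefore run a continuation (bootstrap) argument: on the maximal interval where $u_h$ maps into $\tilde I$ the bound above holds, and by the Sobolev embedding $H^{\sigma}(\S)\hookrightarrow L^{\infty}(\S)$ (guaranteed by $\sigma>d_{\S}/2$) together with the inverse inequality $\|\theta\|_{L^{\infty}(\S)}\le Ch^{-d_{\S}/2}\|\theta\|_{L^2(\S)}\le Ch^{\sigma-d_{\S}/2}\to0$, the numerical range is driven arbitrarily close to $\text{Range}(u)$; the hypothesis that $\text{Range}(u)$ is a \emph{proper} subset of $\tilde I$ then forces $\text{Range}(u_h)\subset\tilde I$ on all of $[0,T]$ once $h$ is small, closing the argument. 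A secondary delicate point is the bookkeeping that secures the sharp $L^2$ order $h^{\sigma}$ rather than the energy order $h^{\sigma-1}$, which rests on the $a$-orthogonality of $R_h$ and on sufficient temporal regularity to bound $\|\dot\rho\|_{L^2(\S)}$; should the latter fail, a duality (Aubin--Nitsche) argument on $\S$ would be the fallback.
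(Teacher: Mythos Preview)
Your proposal is correct and mirrors the paper's two-step strategy: first show (as the paper cites from \cite[Thm.~3.1]{sun2022kernel}) that for $\mathcal N=-I$ the projected system \eqref{eq:EvolPDESemiDis} coincides exactly with the standard Galerkin equation \eqref{eq:ACGalerkinEq}, and then invoke the parabolic Galerkin error theory (delegated by the paper to \cite[Thm.~2.3]{kunemund2019high} and the kernel estimates of \cite{hangelbroek2018direct}, but spelled out by you via Ritz projection, energy estimate, Gronwall, and the $L^\infty$ continuation argument). One minor redundancy: once the equivalence in the first step is established there is no residual projection consistency error in the nonlinear term, so your appeal to Lemma~\ref{thm:ProjOper} is unnecessary and can be dropped.
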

	\begin{proof}
		The proof can be completed by combining results in  \cite{hangelbroek2018direct,kunemund2019high,sun2022kernel}. Using the estimates in \cite{hangelbroek2018direct}, we can verify that the finite dimensional space constructed with the Mat\'{e}rn kernel satisfies three properties (inverse estimate, simultaneous approximation and convergence order) in \cite[Definition 2.2]{kunemund2019high} for closed, compact, connected and smooth surfaces. Thus, for the Galerkin equation
		\begin{equation}\label{eq:ACGalerkinEq}
			\IPSph{\partial_t u_h}{\chi_{\eta}}+\varepsilon^2\IPSph{\nabla_{\S} u_h}{\nabla_{\S}\chi_{\eta}}=\IPSph{G'(u_h)}{\chi_{\eta}},~\forall \chi_{\eta}\in V_{X,\phi_m},
		\end{equation}
		with initial conditions satisfying \eqref{eq:ACInitErr}, we   could obtain the error estimate $$\|u(t)-u_h(t)\|_{H^j(\S)}\leq Ch^{\sigma-j}\|u\|_{H^{\sigma}(\S)},$$ similar to \cite[Thm. 2.3]{kunemund2019high}.
		From \cite[Thm. 3.1]{sun2022kernel}, we can verify that the projected semi-discrete equation \eqref{eq:EvolPDESemiDis} is equivalent to \eqref{eq:ACGalerkinEq} and the proof is completed.
	\end{proof}
	
	
	From \cite[Thm 3.15]{kunemund2019high}, we learn that the Crank-Nicolson scheme of the Allen-Cahn equation
	$$
	A\frac{\alpha^{n+1}-\alpha^n}{\triangle t}=-\varepsilon^2 B\frac{\alpha^{n}+\alpha^{n+1}}{2}-g\Big(\frac{\alpha^n+
		\alpha^{n+1}}{2}\Big),
	\text{\quad  {with} }
	g(\alpha)=\bigg[ \Big\langle G'\big(\sum_{\xi\in X}\alpha_{\xi}\chi_{\xi}  \big), \chi_{\eta}\Big\rangle\bigg]_{\eta\in X},
	$$
	provides a second order of convergence in time. The only difference between the Crank-Nicolson scheme and the AVF  {fully-discrete}  scheme \eqref{eq:ACfullDiscretization} is the nonlinear term. To proceed, we prove the following useful lemma about the estimate of the nonlinear function.
	\begin{lemma}\label{lem:NonlinearTerm}
		Let $G'$ be a smooth and Lipschitz continuous function satisfying the conditions in \ref{thm:ProjOper}. Let $u\in C^2([0,T];H^{\sigma}(\S))$ be the exact solution to \eqref{eq:ACeq} with $\Range(u)\subset I$, $u_h^n\in\trialsp$ with $\alpha^n$ be the solution to the Allen-Cahn equation \eqref{eq:ACfullDiscretization} and $e^n=u_h^n-u^n$. Then,
		{there exists some constant $C$ independent of $u$ such that the following estimate holds}
		$$\Big|G'\Big(\frac{u^{n+1}+u^n}{2}\Big)-\frac{G(u_h^{n+1})-G(u_h^n)}{u_h^{n+1}-u_h^n}\Big|\leq C \Big(|e^{n+\frac{1}{2}}|+|e^n|^2+|e^{n+1}|^2+ {(\triangle t )^2}\Big).$$
	\end{lemma}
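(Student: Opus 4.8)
The plan is to prove this as a \emph{pointwise} estimate on $\S$: at each fixed $x\in\S$ the quantities $u^n(x),u_h^n(x)$ are real numbers, and the inequality is an estimate between real numbers with a constant $C$ independent of $u$. The key observation that makes everything clean is that the discrete gradient appearing in the AVF scheme admits the integral representation
\begin{equation*}
\frac{G(u_h^{n+1})-G(u_h^n)}{u_h^{n+1}-u_h^n}=\int_0^1 G'\big((1-s)u_h^n+s\,u_h^{n+1}\big)\,\d s,
\end{equation*}
which is well defined even when $u_h^{n+1}=u_h^n$ and removes any division issue. With this in hand I would insert the intermediate value $G'\big(\tfrac{u_h^n+u_h^{n+1}}{2}\big)$ and split the left-hand side by the triangle inequality into a first term $\big|G'(\tfrac{u^n+u^{n+1}}{2})-G'(\tfrac{u_h^n+u_h^{n+1}}{2})\big|$ and a second term $\big|G'(\tfrac{u_h^n+u_h^{n+1}}{2})-\int_0^1 G'((1-s)u_h^n+s\,u_h^{n+1})\,\d s\big|$.

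For the first term, since $G'$ is Lipschitz on the relevant range of arguments, it is bounded by a constant times $\big|\tfrac{(u^n+u^{n+1})-(u_h^n+u_h^{n+1})}{2}\big|=\big|\tfrac{e^n+e^{n+1}}{2}\big|=|e^{n+1/2}|$, writing $e^{n+1/2}:=\tfrac12(e^n+e^{n+1})$; this produces the first contribution on the right-hand side. For the second term I would recognise the integral as the midpoint quadrature of $s\mapsto G'((1-s)u_h^n+s\,u_h^{n+1})$ and apply the symmetric Taylor remainder: expanding $G(u_h^{n+1})$ and $G(u_h^n)$ about $m=\tfrac{u_h^n+u_h^{n+1}}{2}$ and subtracting cancels the $G'(m)$ and $G''(m)$ contributions and leaves a remainder $\tfrac{1}{12}\big(G'''(\zeta_1)+G'''(\zeta_2)\big)\big(\tfrac{u_h^{n+1}-u_h^n}{2}\big)^2$. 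Hence the second term is bounded by $C\,(u_h^{n+1}-u_h^n)^2$, with $C$ depending only on a bound for $G'''$ on the relevant interval.

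It then remains to control $(u_h^{n+1}-u_h^n)^2$. Writing $u_h^{n+1}-u_h^n=(u^{n+1}-u^n)+(e^{n+1}-e^n)$, the hypothesis $u\in C^2([0,T];H^\sigma(\S))$ gives $u^{n+1}-u^n=\int_{t_n}^{t_{n+1}}\partial_t u\,\d t=O(\triangle t)$ pointwise, so $(u^{n+1}-u^n)^2\leq C(\triangle t)^2$, while $(e^{n+1}-e^n)^2\leq 2|e^{n+1}|^2+2|e^n|^2$. Combining these gives $(u_h^{n+1}-u_h^n)^2\leq C\big((\triangle t)^2+|e^n|^2+|e^{n+1}|^2\big)$; substituting into the bound for the second term and adding the first term yields exactly the claimed estimate.

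The main technical point to watch is that all arguments fed into $G'$ and $G'''$ --- the exact and discrete midpoints and the convex combinations $(1-s)u_h^n+s\,u_h^{n+1}$ --- must remain inside a fixed compact interval on which $G$ is smooth with bounded derivatives, so that the Lipschitz constant and the bound on $G'''$ are uniform and hence $C$ is independent of $u$. This is ensured by the assumption $\Range(u)\subset I$ together with the a priori closeness of $u_h^n$ to $u^n$ coming from Theorem~\ref{thm:SpatialErr}; invoking the standing hypotheses of Lemma~\ref{thm:ProjOper} on $G'$ over such an interval makes the two constants legitimate and the argument self-contained.
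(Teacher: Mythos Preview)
Your proposal is correct and follows essentially the same route as the paper: insert $G'\big(\tfrac{u_h^n+u_h^{n+1}}{2}\big)$, bound the first difference by the Lipschitz constant of $G'$ to get $|e^{n+1/2}|$, use a Taylor expansion about the discrete midpoint to show the difference quotient equals $G'(u_h^{n+1/2})$ plus a remainder of order $(u_h^{n+1}-u_h^n)^2$, and then split $u_h^{n+1}-u_h^n$ into the exact increment (giving $(\triangle t)^2$) and the error increments (giving $|e^n|^2+|e^{n+1}|^2$). Your presentation is in fact a bit more careful --- the integral representation cleanly handles the case $u_h^{n+1}=u_h^n$, and you make explicit the need for all arguments to stay in a fixed compact interval so that the Lipschitz constant and the bound on $G'''$ are uniform --- but the underlying argument is identical to the paper's.
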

	\begin{proof}
		Using the Taylor's expansion at $u_h^{n+\frac{1}{2}}=\frac{1}{2}( u_h^n+u_h^{n+1} )$ to yield
		\begin{equation*}
			\frac{G(u_h^{n+1})-G(u_h^n)}{u_h^{n+1}-u_h^n}=G'(u_h^{n+\frac{1}{2}})+\frac{1}{24}G'''(\eta)(u_h^{n+1}-u_h^n)^2,
		\end{equation*}
		where $\eta$ is between $u_h^n$ and $u_h^{n+1}$. Then there exists a general constant $C$ such that
		\begin{equation*}
			\begin{aligned}
				&\Big|G'(u^{n+\frac{1}{2}})-\frac{G(u_h^{n+1})-G(u_h^n)}{u_h^{n+1}-u_h^n}\Big|\\
				\leq &|G'(u^{n+\frac{1}{2}})-G'(u_h^{n+\frac{1}{2}})|+C|u_h^{n+1}-u_h^n|^2\\
				\leq & C|u^{n+\frac{1}{2}}-u_h^{n+\frac{1}{2}}|+C|u_h^{n+1}-u^{n+1}+u^{n+1}-u^n+u^n-u_h^{n}|^2\\
				\leq &C (|e^{n+\frac{1}{2}}|+|e^n|^2+|e^{n+1}|^2)+C(\triangle t )^2,
			\end{aligned}
		\end{equation*}
		where we have used the Lipschitz condition of $G'$.
	\end{proof}

	
	Now we give the full discretization error estimate.
	\begin{theorem}
		Suppose that $u\in C^2(0,T;H^{\sigma}(\S))$. Let $u_h^n\in\trialsp$ be the solution of \eqref{eq:ACfullDiscretization}.
		Then under the assumptions of Theorem \ref{thm:SpatialErr}, we have the estimate
		\begin{equation}\label{eq:FullDiscretEstimate}
			\|u_h^n-u(t^n)\|_{L_2(\S)}\leq C(\triangle t^2+h^{\sigma}),
		\end{equation}
		where $C$ is independent of  {$u$}, $h$ and $\triangle t$.
	\end{theorem}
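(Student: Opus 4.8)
The overall strategy is a standard energy argument for the error $e^n:=u_h^n-u(t^n)$, engineered so that the two available ingredients plug in directly: Theorem~\ref{thm:SpatialErr} for the spatial contribution and Lemma~\ref{lem:NonlinearTerm} for the nonlinear contribution. First I would record that the exact solution satisfies the scheme up to a temporal truncation error: writing $\delta_t v^n:=\tfrac{v^{n+1}-v^n}{\triangle t}$ and $\bar v^n:=\tfrac{v^n+v^{n+1}}{2}$, a midpoint Taylor expansion in time (the same one that endows the Crank--Nicolson scheme in \cite{kunemund2019high} with its second order) shows that for every $\chi_\eta\in\trialsp$
$$\IPSph{\delta_t u^n}{\chi_\eta}+\varepsilon^2\IPSph{\nabla_{\S}\bar u^n}{\nabla_{\S}\chi_\eta}+\IPSph{G'(u(t^{n+\frac12}))}{\chi_\eta}=\IPSph{\mathcal{T}^n}{\chi_\eta},\qquad \|\mathcal{T}^n\|_{L^2(\S)}\leq C\triangle t^2.$$
Subtracting this from the weak form of \eqref{eq:ACfullDiscretization} yields the error equation
$$\IPSph{\delta_t e^n}{\chi_\eta}+\varepsilon^2\IPSph{\nabla_{\S}\bar e^n}{\nabla_{\S}\chi_\eta}+\IPSph{\mathcal{G}^n}{\chi_\eta}=-\IPSph{\mathcal{T}^n}{\chi_\eta},$$
where the nonlinear defect $\mathcal{G}^n=\frac{G(u_h^{n+1})-G(u_h^n)}{u_h^{n+1}-u_h^n}-G'\big(u(t^{n+\frac12})\big)$ is precisely the quantity estimated in Lemma~\ref{lem:NonlinearTerm}.

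Since $u(t^n)\notin\trialsp$, the error $e^n$ is not an admissible test function, so I would split $e^n=\theta^n+\rho^n$ with $\theta^n\in\trialsp$ and a spatial projection error $\rho^n$. Taking $\rho^n$ to be the error of the elliptic (Ritz) projection associated with the coercive bilinear form $\varepsilon^2\IPSph{\nabla_{\S}\cdot}{\nabla_{\S}\cdot}+\IPSph{\cdot}{\cdot}$ converts the spatial gradient cross-term into a lower-order $L^2$ term, and Theorem~\ref{thm:SpatialErr} together with the simultaneous-approximation property of $\trialsp$ gives $\|\rho^n\|_{L^2(\S)}\leq Ch^{\sigma}$ and $\|\delta_t\rho^n\|_{L^2(\S)}\leq Ch^{\sigma}$ (the latter because $u\in C^2$ in time). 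Testing the error equation with $\chi_\eta=\bar\theta^n\in\trialsp$, the time-difference term telescopes into $\frac{1}{2\triangle t}\big(\|\theta^{n+1}\|_{L^2(\S)}^2-\|\theta^{n}\|_{L^2(\S)}^2\big)$, the principal gradient contribution $\varepsilon^2\|\nabla_{\S}\bar\theta^n\|_{L^2(\S)}^2$ is nonnegative (mirroring the dissipativity of Theorem~\ref{thm:DissipationPropSemiDisc}) and is kept on the left, while the remaining spatial cross-terms and the truncation term contribute only $O(h^{\sigma})$ and $O(\triangle t^2)$ respectively.

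The heart of the proof is the nonlinear defect $\mathcal{G}^n$. By Lemma~\ref{lem:NonlinearTerm} it is bounded pointwise by $C\big(|e^{n+\frac12}|+|e^n|^2+|e^{n+1}|^2+\triangle t^2\big)$; the linear part $|e^{n+\frac12}|$ is harmless (after Young's inequality it feeds $\|\theta\|_{L^2(\S)}^2$ into the Gronwall sum plus an $O(h^{2\sigma})$ remainder), but the quadratic terms $|e^k|^2$ are the main obstacle, since after Cauchy--Schwarz they appear as $\||e^k|^2\|_{L^2(\S)}=\|e^k\|_{L^4(\S)}^2\leq\|e^k\|_{L^\infty(\S)}\|e^k\|_{L^2(\S)}$ and therefore require a uniform $L^\infty$ bound on the error before the estimate can close linearly. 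I expect to resolve this by a bootstrap/induction: assuming $\|e^k\|_{L^\infty(\S)}\leq C$ for $k\leq n$ (true at $k=0$ by the initial-data hypothesis \eqref{eq:ACInitErr} and the embedding $H^{\sigma}(\S)\hookrightarrow L^\infty(\S)$ valid for $\sigma>d_{\S}/2$), the quadratic terms reduce to $C\|e^k\|_{L^2(\S)}$, so that summing over $n$ and applying a discrete Gronwall inequality gives $\|\theta^{n+1}\|_{L^2(\S)}\leq C(\triangle t^2+h^{\sigma})$; the hypothesis is then reinstated at level $n+1$ via the inverse inequality $\|\theta^{n+1}\|_{L^\infty(\S)}\leq Ch^{-d_{\S}/2}\|\theta^{n+1}\|_{L^2(\S)}$ available for $\trialsp$ from \cite{hangelbroek2018direct}, under the mild step-size relation ensuring $h^{-d_{\S}/2}(\triangle t^2+h^{\sigma})\to 0$. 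Finally, combining $\|\theta^n\|_{L^2(\S)}\leq C(\triangle t^2+h^{\sigma})$ with $\|\rho^n\|_{L^2(\S)}\leq Ch^{\sigma}$ through the triangle inequality yields the claimed bound \eqref{eq:FullDiscretEstimate}.
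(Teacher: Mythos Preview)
Your proposal is correct and is precisely the standard energy argument the paper invokes by citation: the paper's own proof consists only of pointing to Thom\'ee \cite[Thm.~13.4]{thomee2006Galerkin} for the Crank--Nicolson Galerkin analysis and to Lemma~\ref{lem:NonlinearTerm} for the modification of the nonlinear term, which is exactly the Ritz-splitting plus discrete-Gronwall argument you have written out in full. The one refinement you add is the $L^\infty$ bootstrap via the inverse inequality to absorb the quadratic $|e^k|^2$ contributions from Lemma~\ref{lem:NonlinearTerm}; the paper does not make this step explicit (it is subsumed in the cited references and the standing local-Lipschitz hypothesis on $G'$ from Theorem~\ref{thm:SpatialErr}), so your version is in fact more careful about how the estimate closes, at the price of the mild step-size relation you note.
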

	\begin{proof}
		The convergence of Galerkin methods for solving the Allen-Cahn equation had
		been investigated by many authors, see \cite{thomee2006Galerkin,kunemund2019high}.
		Combining Lemma \ref{lem:NonlinearTerm} and following the standard results in classical Galerkin theory \cite[Thm. 13.4]{thomee2006Galerkin} complete the proof.
	\end{proof}
	
	%
	%

	\section{Computable scheme via kernel-based quadrature and local approximation}
	\label{sec:KenelQuad}
	
	
	To compute the fully discrete schemes (\ref{eq:ACfullDiscretization}) and (\ref{eq:CHfullDiscretization}) for the Allen-Cahn and the Cahn-Hilliard equation,  appropriate quadrature is needed to evaluate the surface integrals that arise.
	Kernel-based quadrature formulas for computing surface integrals on spheres and other homogeneous manifolds can be found in early work \cite{fuselier2014kernel}, which was combined with Galerkin methods to solve problems on spheres. However, it could not be easily extended to surface integrals over general surfaces. Later, Reeger \cite{reeger2016numerical} developed a high-order kernel-based quadrature specifically for computing surface integrals on general surfaces.
	We adopt the quadrature rule of Reeger to enable approximation of the surface integrals arising in our numerical schemes.
	For some given set of quadrature points $Y=\{\zeta_1,\ldots,\zeta_{N_Y}\}\subseteq\S$, the quadrature rule of Reeger for any integrable function ${f}$ on $\S$
	is in the form of:
	\begin{equation}\label{eq:KernelQuadrature}
		\int_{\S}{f}(x)\d \mu\approx  \sum_{\zeta\in Y}\omega_{\zeta}{f}(\zeta)=: Q_Y({f}),
	\end{equation}
	with quadrature coefficients $\{ \omega_{\zeta} \} {\subset \R^{N_Y}}$ computed by projection to planar domains, as described in detail in \cite{reeger2016numerical}.

	To approximate the mass matrix $A$ and stiffness matrix $B$ in \eqref{eq:MassMatrix} and \eqref{eq:ACSemiDisEqEnergy}
	in the Galerkin equations, the {two} cases we care involve:
	\begin{equation}\label{eq:InnerProductDiscretization}
		f \in \Big\{ \chi_{\xi}\chi_{\eta}, \quad \nabla_{\S}\chi_{\xi}\cdot\nabla_{\S}\chi_{\eta}
		\Big\},
	\end{equation}
	for all Lagrange functions $\chi_{\xi}$
	for $\xi\in X$ as in \eqref{eq:LagrangFunc} associated with trial centers in a set $X$.

	Denote $A^Y=\big(Q_Y(\chi_{\xi}\chi_{\eta})\big)_{\xi,\eta\in X}$ and $B^Y=\left(Q_Y(\nabla_{\S}\chi_{\xi}\cdot\nabla_{\S}\chi_{\eta})\right)_{\xi,\eta\in X}$ to be the
	$N_X \times N_X$ approximated mass and stiffness matrices obtained by the quadrature nodes $Y$.
	Also let $\tilde{\lambda}_g^Y=(\tilde{\lambda}_{g,\eta}^Y)_{\eta\in X}$ be the approximate vector of length $N_X$ with entries
	\begin{equation}
		\tilde{\lambda}_{g,\eta}^Y=\sum_{\zeta\in Y}\Big(\frac{G(u_h^{n+1})-G(u_h^n)}{u_h^{n+1}-u_h^n}\Big)
		\Big|_{\zeta}\chi_{\eta}(\zeta)\omega_{\zeta}.
	\end{equation}
	We aim to obtain  {an approximation to the} discretized system  {in the form of:}
	\begin{equation}\label{eq:NLWfullDiscretization_local}
		A^Y\frac{\alpha^{n+1}-\alpha^n}{\triangle t}=-B^Y\frac{\alpha^{n}+\alpha^{n+1}}{2}
		-\tilde{\lambda}_g^Y.
	\end{equation}
	The computation of $A^Y$ and $\tilde{\lambda}_g^Y$ is straightforward and involves only nodal values of the Lagrange functions. To compute the entries of $B^Y$, we need the surface gradient. For any surface point $\zeta\in Y$, we denote its normal vector as $\mathbf{n}_{\zeta}$, then the surface gradient of $\phi_m(\cdot,\eta)$ at $\zeta$ can be computed by
	$$\nabla_{\S}\phi_m(\cdot,\eta)|_{\zeta}=(\mathbf{I}-\mathbf{n}_{\zeta}\mathbf{n}_{\zeta}^T)\nabla\phi_m(\zeta,\eta),$$
	where $\nabla$ is the gradient in $\R^3$ with respect to the first variable. The computation of the stiffness matrix reduces to
	\begin{equation}\label{eq:ComputStiffMatrix}
		\begin{aligned}
			Q_Y(\nabla_{\S}\chi_{\xi}{\,\boldsymbol\cdot\,}\nabla_{\S}\chi_{\eta})
			&=\sum_{\zeta\in Y}\big(\nabla_{\S} \chi_{\xi}(\zeta){\,\boldsymbol\cdot\,}\nabla_{\S}\chi_{\eta}(\zeta)\big)\ \omega_{\zeta}
			\\
			&=\sum_{\zeta\in Y}\Big(\sum_{\beta\in X}\alpha_{\beta,\xi}\nabla_{\S}\phi_m(\zeta,\beta)\Big){\,\boldsymbol\cdot\,}\Big(\sum_{\gamma\in X}\alpha_{\gamma,\eta}\nabla_{\S}\phi_m(\zeta,\gamma)\Big)\ \omega_{\zeta}.
		\end{aligned}
	\end{equation}
	More implementation details for computing $A^Y$, $B^Y$ by using Lagrange functions of conditionally positive definite kernels can be found in \cite[Sec. 9]{narcowich2017novel}.
	Finally, the {fully-discrete nonlinear} equation \eqref{eq:NLWfullDiscretization_local} could be solved by using some iteration algorithms, such as the fixed point iteration method and Newton's method.
	The following theorem establishes the error estimates of the discretized integrals $Q_Y(\chi_{\xi}\chi_{\eta})$ and $Q_Y(\nabla_{\S}\chi_{\xi}\cdot\nabla_{\S}\chi_{\eta})$.

	\begin{theorem}\label{thm:quadratureErr}
		Let the surface $\S$ be a smooth, closed surface satisfies the assumptions in \cite{reeger2016numerical,hangelbroek2018direct}
		and $X,Y\subset \S$ are sets of trial centers and quadrature points.
		Let $m>d_{\S}/2+1$
		be the smoothness order of the Mat\'{e}rn kernels that reproduce the Sobolev space on surface $H^m(\S)$ and Lagrange functions $\chi_{\xi}$ defined on $X$ as in \eqref{eq:LagrangFunc}.
		If  we employ a kernel-based quadrature formula on $Y$ of Reeger \eqref{eq:KernelQuadrature} with approximation power
		\begin{equation}\label{eq:ErrQuadrature}
			\Big|\int_{\S} {f}\d \mu- Q_Y( {f})\Big|\leq C h_Y^{\ell}\| {f}\|_{H^{\ell}(\S)},
		\end{equation}
		for some $\ell\geq m$, and $h_Y$ is the fill distance of $Y$.
		Then there exists a  constant $C$ independent of $X$ and $Y$ so that the following inequalities hold
		$$\Big|\int_{\S}\chi_{\xi}\chi_{\eta}\d \mu-Q_Y(\chi_{\xi}\chi_{\eta})\Big|\leq C(h_Y/h_X)^{m}h_X,$$
		$$\Big|\int_{\S}\nabla_{\S}\chi_{\xi}\cdot\nabla_{\S}\chi_{\eta}\d \mu-Q_Y(\nabla_{\S}\chi_{\xi}\cdot\nabla_{\S}\chi_{\eta})\Big|\leq C(h_Y/h_X)^{m},$$
		for sufficiently small $h_X$ and $h_Y$.
	\end{theorem}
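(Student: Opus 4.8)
The plan is to apply the assumed quadrature bound \eqref{eq:ErrQuadrature} directly to the two integrands $f=\chi_{\xi}\chi_{\eta}$ and $f=\nabla_{\S}\chi_{\xi}\cdot\nabla_{\S}\chi_{\eta}$ at order $\ell=m$, and then to estimate the resulting Sobolev norms $\|f\|_{H^m(\S)}$ in terms of the fill distance $h_X$. The first thing to check is that both products genuinely lie in $H^m(\S)$, so that \eqref{eq:ErrQuadrature} is applicable with $\ell=m$. Since $H^s(\S)$ is a Banach algebra for $s>d_{\S}/2$, the product $\chi_{\xi}\chi_{\eta}$ belongs to $H^m(\S)$ as soon as $\chi_{\xi}\in H^m(\S)$. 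For the gradient product one instead needs $\nabla_{\S}\chi_{\xi}\in H^m(\S)$, i.e.\ $\chi_{\xi}\in H^{m+1}(\S)$, and this is exactly where the hypothesis $m>d_{\S}/2+1$ is used: because $\phi_m$ is the fundamental solution of the order-$2m$ elliptic operator $\mathcal{L}$, elliptic regularity endows each $\phi_m(\cdot,\eta)$---and hence every Lagrange function $\chi_{\xi}$---with smoothness beyond the native space, up to roughly $H^{2m-d_{\S}/2}(\S)$, which contains $H^{m+1}(\S)$ precisely when $m>d_{\S}/2+1$. The algebra property then places $\nabla_{\S}\chi_{\xi}\cdot\nabla_{\S}\chi_{\eta}$ in $H^m(\S)$ as well.

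Granting membership, \eqref{eq:ErrQuadrature} reduces the theorem to two norm estimates, namely
\[
\|\chi_{\xi}\chi_{\eta}\|_{H^m(\S)}\le C\,h_X^{\,1-m}\qquad\text{and}\qquad \|\nabla_{\S}\chi_{\xi}\cdot\nabla_{\S}\chi_{\eta}\|_{H^m(\S)}\le C\,h_X^{-m}.
\]
Multiplying these by $h_Y^m$ and invoking quasi-uniformity---so that $q_X\asymp h_X$, $\rho_X$ stays bounded, and the pointwise and coefficient decay estimates recalled in Section~\ref{sec:Prelim} are available---collects the powers into $(h_Y/h_X)^m h_X$ for the mass entry and $(h_Y/h_X)^m$ for the stiffness entry. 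The extra factor $h_X$ present in the mass estimate and absent in the stiffness estimate is precisely the one-power gap between the two displayed norms, which reflects the loss of one derivative caused by the surface gradient.

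The delicate part, which I expect to be the main obstacle, is proving these two norm estimates with the sharp power of $h_X$. The crude algebra inequality $\|\chi_{\xi}\chi_{\eta}\|_{H^m}\le C\|\chi_{\xi}\|_{H^m}\|\chi_{\eta}\|_{H^m}$ is too lossy, since each factor $\|\chi_{\xi}\|_{H^m(\S)}$---which is comparable to the native-space norm $\alpha_{\xi,\xi}^{1/2}$---is already a negative power of $h_X$, and taking the product over-counts. Instead I would exploit the localization of the Lagrange functions: by the exponential pointwise and coefficient bounds of Section~\ref{sec:Prelim}, each product is, up to exponentially small tails, concentrated on a single geodesic ball of radius $O(h_X|\log h_X|)$ about $\xi$, where it behaves like one bump of unit height and width $O(h_X)$. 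Estimating the $H^m$ norm on this effective support through Leibniz's rule and the derivative scaling $|D^k\chi_{\xi}|\lesssim h_X^{-k}$, and then summing the off-diagonal pairs $\xi\neq\eta$ (whose contributions are negligible by the same exponential decay), should recover the displayed powers. The careful bookkeeping of the logarithmic stencil factor and of the surface dimension $d_{\S}=2$, together with confirming that the gradient costs exactly one factor $h_X^{-1}$ rather than two, is where the argument has to be carried out most attentively.
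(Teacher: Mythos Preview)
Your overall plan---apply the quadrature bound \eqref{eq:ErrQuadrature} at order $m$ and reduce to Sobolev norm estimates for the two products---is exactly the paper's strategy. The divergence is in how those norm estimates are obtained and at what regularity level.

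For the mass entry, the paper gets $\|\chi_{\xi}\chi_{\eta}\|_{H^m(\S)}\le Ch_X^{1-m}$ not by localization bookkeeping but by invoking the mixed product rule $\|fg\|_{H^m}\lesssim \|f\|_{L^\infty}\|g\|_{H^m}+\|f\|_{H^m}\|g\|_{L^\infty}$ together with the known bounds $\|\chi_\xi\|_{L^\infty}\le C$ and $\|\chi_\xi\|_{H^m}\le Ch_X^{1-m}$, the latter being the stability/inverse (Bernstein) inequalities established for Mat\'ern Lagrange functions in \cite[Prop.~9, Thm.~10]{hangelbroek2018direct} (following \cite[Prop.~4.4]{narcowich2017novel}). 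You flagged the pure algebra bound $\|\chi_\xi\|_{H^m}\|\chi_\eta\|_{H^m}$ as too lossy, which is correct, but the $L^\infty$-$H^m$ version is sharp enough and short-circuits the explicit localization argument you propose; your Leibniz-on-a-ball sketch would in effect be rederiving those cited inequalities by hand.

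For the stiffness entry, the paper does \emph{not} place $\nabla_\S\chi_\xi\cdot\nabla_\S\chi_\eta$ in $H^m(\S)$ and hence does not need your elliptic-regularity claim $\chi_\xi\in H^{m+1}(\S)$. Instead it uses only $\chi_\xi\in H^m(\S)$, so that $\nabla_\S\chi_\xi\in H^{m-1}(\S)\cap L^\infty(\S)$ (the $L^\infty$ control is where $m>d_\S/2+1$ enters), and bounds the product in $H^{m-1}$. Your route via extra smoothness of the fundamental solution is plausible but is an additional ingredient the paper avoids; working one order lower on the gradient side is the more economical choice.
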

	\begin{proof}
		The case of spherical integrals about the Lagrange functions associated with surface splines was obtained in \cite[7.2 and 7.3]{narcowich2017novel}. The critical success factors are the Bernstein inequality, stability results, and inverse inequalities of Lagrange functions.
		In \cite[Thm. 3, Prop.~9, Thm. 10]{hangelbroek2018direct}, the authors also verified the same theories for Mat\'{e}rn kernels on closed, compact and smooth domains. Here, we give a brief proof, the details can be found in the above references.
		Since  {$m>d_\S/2+1$}, $\chi_{\xi}\in H^{m}(\S)\cap L^{\infty}(\S)$ and $\nabla_{\S}\chi_{\xi}\in H^{m-1}(\S)\cap L^{\infty}(\S)$. Follow the same technique in \cite[Prop. 4.4]{narcowich2017novel}
		and use the stability result and inverse inequality in \cite[Prop.~9, Thm. 10]{hangelbroek2018direct}, we see that $\|\chi_{\xi}\chi_{\eta}\|_{H^{m}}\leq C h_X^{1-m}$.
		From the bound in \eqref{eq:ErrQuadrature} and the condition $m\leq\ell$, we have
		\begin{align*} \Big|\int_{\S}\chi_{\xi}\chi_{\eta}\d \mu-Q_Y(\chi_{\xi}\chi_{\eta})\Big|\leq Ch_Y^{m}\|\chi_{\xi}\chi_{\eta}\|_{H^{m}}
			\leq Ch_Y^{m} h_X^{1-m}=C(h_Y/h_X)^{m} h_X.
		\end{align*}
		The error bound of second inequality can be obtained similarly
		\begin{align*} \Big|\int_{\S}\nabla_{\S}\chi_{\xi}\cdot\nabla_{\S}\chi_{\eta}\d \mu-Q_Y(\nabla_{\S}\chi_{\xi}\cdot\nabla_{\S}\chi_{\eta})\Big|\leq Ch_Y^{m}\|\nabla_{\S}\chi_{\xi}\cdot\nabla_{\S}\chi_{\eta}\|_{H^{m-1}}
			\leq C(h_Y/h_X)^{m}.
		\end{align*}
	\end{proof}

	\begin{remark}
		Error bounds for the case $\ell<m$ are also possible by using the results in \cite[Corollary~7.2]{narcowich2017novel}.
		Furthermore, the impact of the quadrature error on the convergence behavior of the discretized Galerkin equation \eqref{eq:NLWfullDiscretization_local} is analyzed in \cite{narcowich2017novel,kunemund2019high}. We can follow the same arguments presented there to derive error estimates as in \cite[Thm. 7.9]{narcowich2017novel}.
	\end{remark}

	
	\subsection{Sparse schemes with local Lagrange function}
	The evaluation of coefficients $\alpha_{\eta,\xi}$ of global Lagrange functions requires solving a large  {(usually full)} system, which is a time-consuming task for a full matrix.  To reduce the computational expense, we can replace the global Lagrange functions with the local Lagrange functions \cite{fuselier2013localized,narcowich2017novel,kunemund2019high} in assembling the mass matrix $A^Y$ and stiffness matrix $B^Y$ since the coefficients decay exponentially fast in the distance between $\xi$ and $\eta$.
	
	The local Lagrange functions $\{\chi_{\xi}^{\text{loc}}:\xi\in X\}$ were introduced in \cite{fuselier2013localized} to build the finite dimensional approximation space $\trialsp$. The basis function $\chi_{\xi}^{\text{loc}}$ is the Lagrange function obtained only with the points that lie in a ball of radius $Kh_X|\log(h_X)|$ about $\xi$ by an appropriate {parameter $K$}  in the definition \eqref{eq:ConstrLocalLag}. The computation of local Lagrange functions can be implemented in a very efficient parallelizable way. We refer the reader to the above mentioned references for more details about the construction and properties of local Lagrange functions.
	
	With the local Lagrange functions, we construct sparse matrices $A^{Y,\text{loc}}$,  $B^{Y,\text{loc}}$ with entries
	\begin{equation}\label{eq:localMass}
		A^{Y,\text{loc}}_{\xi,\eta}=Q_{Y_{\xi}\cup Y_{\eta}}(\chi_{\xi}^{\text{loc}}\chi_{\eta}^{\text{loc}}),~~B^{Y,\text{loc}}_{\xi,\eta}=Q_{Y_{\xi}\cup Y_{\eta}}(\nabla\chi_{\xi}^{\text{loc}}\cdot\nabla\chi_{\eta}^{\text{loc}}) ,
	\end{equation}
	{for each pair of trial centers $\xi,\eta\in X$.}
	Here $Y_{\xi} {\subset Y}$ is the intersection of $Y$ and  {the support of $\chi_{\xi}$}. Since the discretized matrices $A^{Y,\text{loc}}$ and $B^{Y,\text{loc}}$ are symmetric, we expect that the local Lagrange method still preserve the dissipation properties for general dissipative PDEs.
	To sum up, we present in Algorithm~\ref{Algor:1} the pseudocode for a meshless, structure-preserving scheme based on local Lagrange functions, designed to solve dissipative PDEs on surfaces.

	\begin{algorithm}[H]
		\caption{(Local meshless structure-preserving Galerkin scheme)
		}
		\label{Algor:1}
		\begin{enumerate}
			\item Select a global smoothness order $m$ of the  Mat\'{e}rn kernel in Section \ref{sec:Prelim} such that:
			\begin{itemize}
				\item $m>d_{\S}/2+1$, where $d_{\S}$ is the dimension of the surface $\S$
				\item The resulting surface Sobolev space is $W_2^m(\S)$.
			\end{itemize}
			
			\item Define the sets of:
			\begin{itemize}
				\item Trial centers $X \subset\S$ of $N_X$ quasi-uniform points
				\item Quadrature points $Y \subset\S$ of $N_Y$  quasi-uniform points
			\end{itemize}
			by \emph{distmesh} algorithm in \cite{persson2004simplemesh} or other appropriate means

			\item For some constant $K$  and each trial center $\xi \in X$:
			\begin{itemize}
				\item Identify the stencil/neighborhood $X_\xi \subset X$ of trial centers around $\xi$  with radius $Kh_X|\log(h_X)|$
				
				\item Calculate the coefficients $c_{\xi,  {j}}$ of the local Lagrange functions $\chi_{\xi}$ in \eqref{eq:ConstrLocalLag} using the nodes in $X_\xi$
				\item Identify local quadrature points $Y_{\xi}= Y \,\cap\, \text{supp}(\chi_\xi)$ associated with $\xi$
			\end{itemize}
			
			\item For each pair of quadrature points $\xi,\eta \in X$:
			\begin{itemize}
				\item Utilize the kernel quadrature formula introduced in Section~\ref{sec:KenelQuad} to obtain the quadrature weights  $\{ \omega_{ {\xi\eta,j}} \}$ for each point in $Y_\xi\cap Y_\eta$
				\item Compute  the $\xi\eta$- and $\eta\xi$-entries in the local mass matrix $A^{Y,\text{loc}}$ based on equation \eqref{eq:localMass}
				\item Construct the $\xi\eta$- and $\eta\xi$-entries in the local stiffness matrix $B^{Y,\text{loc}}$ based on equations \eqref{eq:localMass} and  \eqref{eq:ComputStiffMatrix}
			\end{itemize}
			
			\item Solve the fully-discrete equation \eqref{eq:ACfullDiscretization} by replacing $A$, $B$  with the approximated matrices $A^{Y,\text{loc}}$, $B^{Y,\text{loc}}$ using an appropriate iterative method for the nodal solution values (i.e., unknown coefficients) in \eqref{alphadef} for all time levels $n$
		\end{enumerate}
	\end{algorithm}

	
	\section{Numerical experiments}\label{sec:NumerExp}

	In this section, we explore the convergence results and energy dissipation properties of our proposed meshless Galerkin method, which solves both the Allen-Cahn and the Cahn-Hilliard equation on surfaces. We compute the integral over different surfaces using an eighth-order kernel-based quadrature formula. To measure the error at the time $T$,
	we use some sets of quasi-uniform and sufficiently dense sets of evaluation point $Z$ to define the relative 
	$L_2(\S)$-norm error of numerical solution $u_h$ to the exact solution $u$ by
	\begin{equation*}
		\frac{\big\|u(\cdot,T)-u_h(\cdot,T)\big\|_{L_2(\S)}}{\|u(\cdot,T)\|_{L_2(\S)}}
		\approx\left( \frac{ \sum_{\eta\in Z }\omega_{\eta}|u(\eta,T)-u_h(\eta,T)|^2 }{  \sum_{\eta\in Z }\omega_{\eta}|u(\eta,T)|^2}\right)^{1/2}.
	\end{equation*}
	
	To apply the results of the theorems presented in this paper, we must first define a Riemannian metric on the surface $\S$. This is theoretically feasible since $\S$ is assumed to be a smooth manifold. Next, we need to compute the explicit form of the Mat\'{e}rn kernel, which involves solving for the fundamental solutions of high-order Laplace-Beltrami operators on $\S$. However, finding this fundamental solution poses a significant challenge because we often lack a closed-form expression for an atlas of $\S$, making it difficult to solve the equation in coordinate representations. Consequently, constructing an intrinsic kernel on $\S$ can be prohibitively difficult \cite{Powell-2022-Koopman}. Despite these challenges, if the kernel is available, the approximations and theorems discussed in this paper are directly applicable. 
	
	In specific examples and numerical studies, it may be practical to consider extrinsic approximations instead. This involves defining a kernel over the embedded manifold by restricting a kernel defined on the ambient Euclidean space to the surface. This approach has the advantage of not requiring precise knowledge of the surface $\S$; we only need to ensure that the inputs to our kernel originate from this restriction rather than from the entire ambient space. The primary drawback of the restriction method lies in the potential loss of smoothness when approximating functions.  Let $\psi_m(r)$ be defined as:
	\begin{equation}\label{eq:MaternKernel}
		\psi_m(r)=C_m(\epsilon r)^{m-d/2}K_{m-d/2}(\epsilon r),
	\end{equation}
	where $m>d/2$ is the smoothness order, $K_{\nu}$ is the modified Bessel function of the second kind, and $C_m=2^{1-(m-d/2)}/\Gamma(m-d/2)$ normalizes the kernel.
	By restricting the kernel to the surface \cite{narcowich2007approximation,Fuselier+Wright-ScatDataInteEmbe:12},
	we get a reproducing kernel for the Sobolev space $W_2^{s}(\S)$ with smoothness order $s=m-1/2>d_\S/2$.

	\subsection{Full Lagrange basis function}
	In this section, we present numerical evidence demonstrating that the full Lagrange function and the associated coefficients, constructed from the Mat\'{e}rn kernel $\psi_m$ restricted to the sphere $\mathbb{S}^2$, decay exponentially away from their center. Using $\psi_m$, the full Lagrange function centered at $\xi$ is given by $$\chi_{\xi}(x) = \sum_{\eta\in X}\alpha_{\xi,\eta}\psi_m(\|x-\eta\|),~~x,\eta\in\mathbb{S}^2.$$ For comparison, we also consider the conditional positive definite surface splines $$\varphi_k(x,y)=(1-x\cdot y)^{k-1}\log(1-x\cdot y),$$ which have been extensively explored in the literature \cite{fuselier2013localized,narcowich2017novel,kunemund2019high,lehoucq2016meshless}. The corresponding Lagrange function is expressed as: $$\chi_{\xi}(x) = \sum_{\eta\in X}\alpha_{\xi,\eta}\varphi_k(x,\eta)+p_{\xi}(x),~~x,y\in\mathbb{S}^2,$$ where $p_{\xi}$ is a certain degree of spherical harmonic.
	
	In our test, we use minimal energy points as centers for the sphere and evaluate the Lagrange function $\chi_{\xi}$ at the north pole $\xi=(0,0,1)$. The function $\chi_{\xi}(x)$ is computed on a set of gridded points in the parameter space $[\theta_1,\theta_2]$ with $152$ equispaced longitudes $\theta_1\in[0,2\pi]$ and $179$ equispaced colatitudes $\theta_2\in[0,\pi]$ (\cite{fuselier2013localized}). The left figure in Figure \ref{fig.LagrangeValue} shows the values of full Lagrange basis function for the Mat\'{e}rn kernel $\psi_3$ and the surface spline $\varphi_2$ for sets of centers with sizes $N_X=900$ and $N_X=2500$. The right figure displays the coefficients $\alpha_{\xi,\eta}$ of the Mat\'{e}rn kernel.
	This figure demonstrates numerically that both the full Lagrange basis function $\chi_{\xi}$ and the Lagrange coefficients $\alpha_{\xi,\eta}$ of the restricted Mat\'{e}rn kernel decay exponentially away from their centers, similar to the behavior observed for the surface splines in Figure \ref{fig.LagrangeValue}(a) that are detailed in \cite[Sec.~4]{fuselier2013localized}. Hence, we will employ the restricted kernel in the following sections.

	\begin{figure}
		\centering
		\vspace{1cm}
		\begin{tikzpicture}[scale=0.8]
			\begin{semilogyaxis}[
				grid=major,
				grid style = dotted,
				mark size = 1.5pt,
				xmin = 0, xmax = 3.15,
				ymin = 10^(-18),ymax = 10^(-0),
				xlabel={Geodesic distance},
				title={(a) Lagrange function decay},
				legend cell align = {left},
				legend pos = north east,
				legend style = {font=\footnotesize},
				legend entries={{$\psi_3$, $N_X=900$},{$\psi_3$, $N_X=2500$},{ $\varphi_2$, $N_X=900$},{$\varphi_2$, $N_X=2500$}}]
				\addplot  +[mark = o,  color = red, mark size = 2pt] table [x=dist,y=N1] {Lag_func_value_matern.dat};
				\addplot  +[mark = square,  color = blue, mark size = 2pt] table [x=dist,y=N2] {Lag_func_value_matern.dat};
				\addplot  +[dashed, mark = triangle*,   color = magenta, mark size = 3pt] table [x=dist,y=N1] {Lag_func_value.dat};
				\addplot  +[dashed,mark = diamond*, color = cyan, mark size = 3pt] table [x=dist,y=N2] {Lag_func_value.dat};
			\end{semilogyaxis}
		\end{tikzpicture}
		\hspace{0.5cm}
		\begin{tikzpicture}[scale=0.8]
			\begin{semilogyaxis}[
				grid=both,
				grid style = dotted,
				mark size = 1.5pt,
				xlabel={Geodesic distance},
				title={(b) Lagrange coefficient decay},
				ymin = 10^(-16),ymax = 10^1,
				xmin = 0, xmax = 3.15,
				legend cell align = {left},
				legend pos = north east,
				legend style = {font=\footnotesize},
				legend entries={{$\psi_3$, $N_X=900$},{$\psi_3$, $N_X=2500$}}]
				\addplot  +[ only marks, mark= o,mark size = 1pt, color=red] table [x=dist,y=coeff] {Lag_coeff_value_matern_N900.dat};
				\addplot  +[only marks, mark = triangle, mark size = 1pt, color =blue] table [x=dist,y=coeff] {Lag_coeff_value_matern_N2500.dat};
			\end{semilogyaxis}
		\end{tikzpicture}
		\vspace{-1pt}
		\caption{(a) Maximum latitudinal values of the Lagrange function for the Mat\'{e}rn kernel $\phi_3$ and the surface spline $\psi_2$; (b) The Lagrange coefficients $\alpha_{\xi,\eta}$ constructed from the restricted Mat\'{e}rn kernel $\phi_3$ with the shape parameter $\epsilon=14$.}
		\label{fig.LagrangeValue}
	\end{figure}
	
	\subsection{Accuracy and convergence test}
	\label{subsec:AC_accuracy}
	In this example, our goal is to evaluate the spatial and temporal accuracy, as well as the convergence rate, of our proposed meshless Galerkin method for solving the Allen-Cahn equation on both the unit sphere and a torus. The equation features a source term:
	\begin{equation}\label{eq:ParaEq}
		u_t-\varepsilon^2\Delta_{\S}u+u^3-u=f(x,t),~~\text{on} ~(0,T]\times\S,
	\end{equation}
	where $f(x,t)$ and the initial conditions are fixed by given exact solutions:
	\begin{equation}\label{eq:ACEqSoluSph}
		u_1(x,t)=\tanh(x_1+x_2+x_3-t),~~x=(x_1,x_2,x_3)\in \Sph^2,
	\end{equation}
	on the unit sphere, or
	\begin{equation}\label{eq:ACEqSoluTorus}
		u_2(x,t)=\frac{1}{8}e^{-5t}x_1(x_1^4-10x_1^2x_2^2+5x_2^4)(x_1^2+x_2^2-60x_3^2),
		~~x=(x_1,x_2,x_3)    \in \mathbb{T}^2,
	\end{equation}
	on a torus $\mathbb{T}^2:=\{x\in \R^3\Big|\big(1-\sqrt{x_1^2+x_2^2}\big)^2+x_3^2-\frac{1}{9}=0\}$.
	These specific functions are frequently employed in literature, for example, as seen in \cite{mohammadi2019numerical,fuselier2013high,fuselier2015order}.
	
	\begin{figure}[tp]
		\centering
		\vspace{0.5cm}
		\begin{tabular}{ccccc}
			\begin{overpic}[width=0.45\textwidth,trim= 0 0 0 0, clip=true,tics=10]{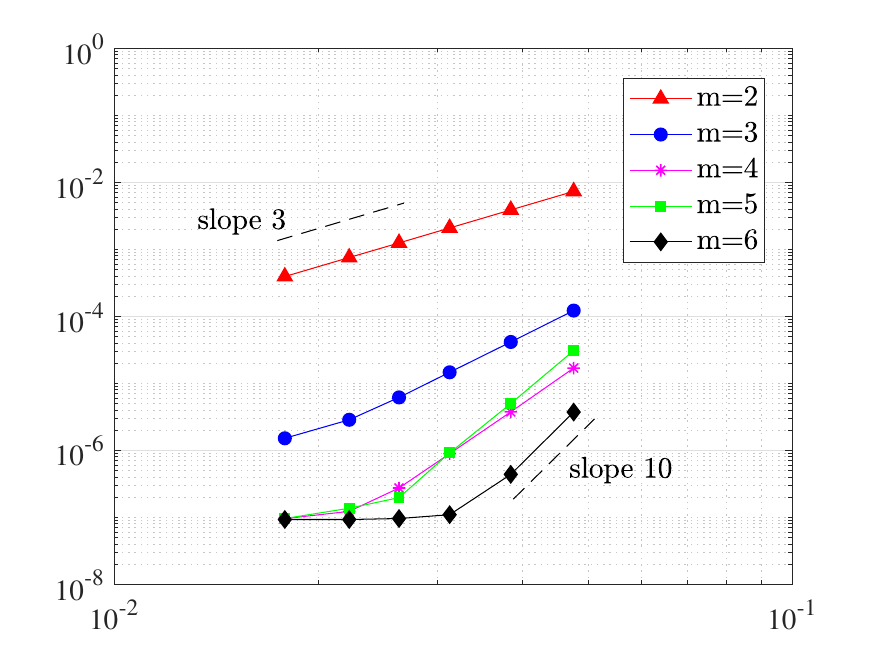}
			\put (36,-1) {$1/\sqrt{N_X}\sim h_X$}
			\put (0,32) {\scriptsize \rotatebox{90}{$L_2$ error}}
			\put (46,75) {\scriptsize Sphere}
		\end{overpic}
		&
		\begin{overpic}[width=0.45\textwidth,trim= 0 0 0 0, clip=true,tics=10]{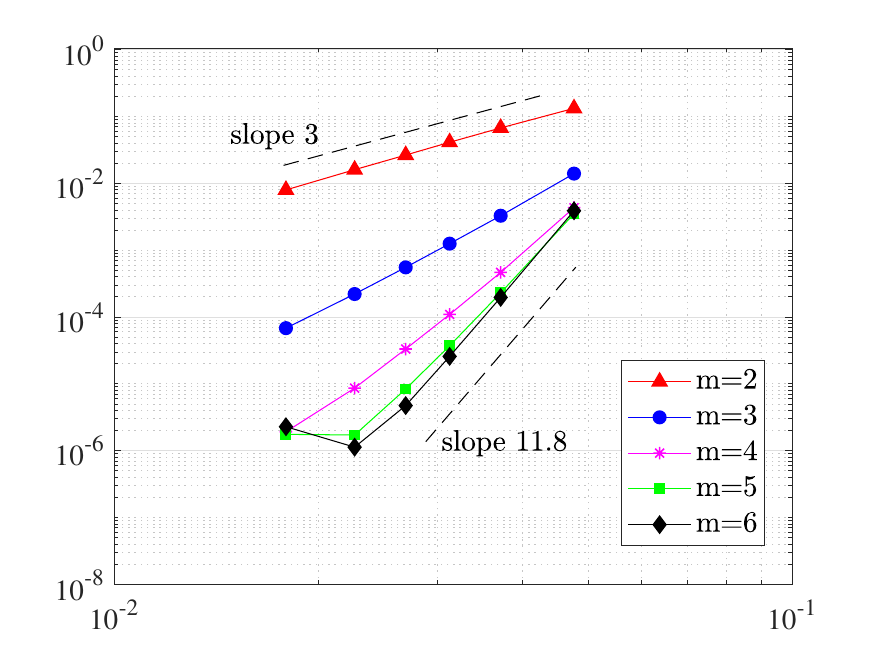}
		\put (36,-1) {$1/\sqrt{N_X}\sim h_X$}
		\put (0,32) {\scriptsize \rotatebox{90}{$L_2$ error}}
		\put (48,75) {\scriptsize Torus}
	\end{overpic}
\end{tabular}
\caption{
	The relative $L_2(\S)$-error profiles with respect to $h_X$ for the proposed sparse kernel-based Galerkin method in Algorithm~\ref{Algor:1}. This method is tested on the Allen-Cahn equation defined on a sphere and a torus, using a local Lagrange basis constructed by the Matérn kernel of varying smoothness orders ($m=2,3,4,5,6$). The quadrature points ($Y$) are held constant in these tests, with $N_Y=40001$ for the sphere and $N_Y=39122$ for the torus.
}
\label{Fig.AC_L2Err_NX}
\end{figure}

\begin{figure}[t]
\vspace{0.5cm}
\centering
\begin{tabular}{ccccc}
	\begin{overpic}[width=0.45\textwidth,trim= 0 0 0 0, clip=true,tics=10]{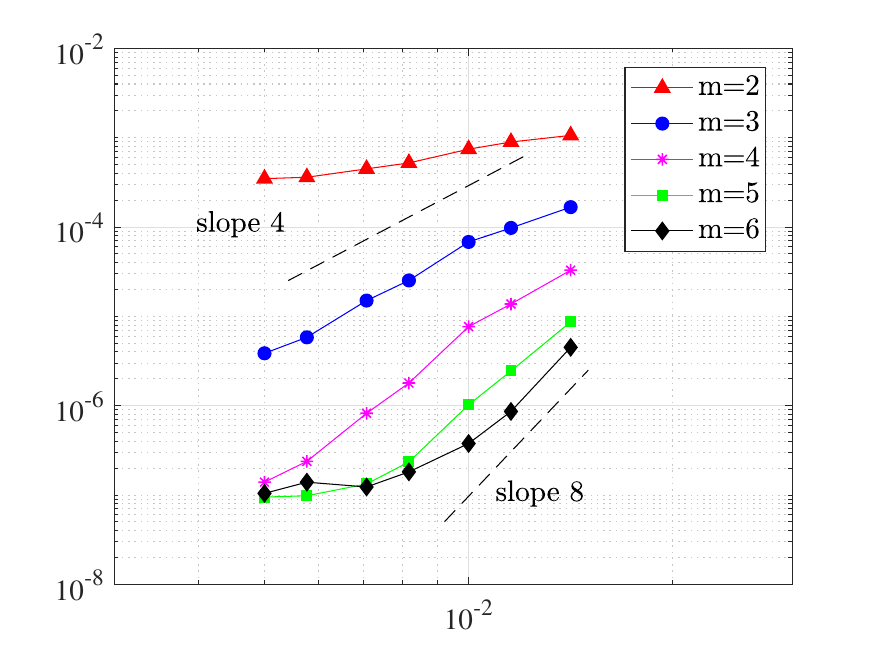}
	\put (36,-3) {$1/\sqrt{N_Y}\sim h_Y$}
	\put (0,32) {\scriptsize \rotatebox{90}{$L_2$ error}}
	\put (46,75) {\scriptsize Sphere}
\end{overpic}
&
\begin{overpic}[width=0.45\textwidth,trim= 0 0 0 0, clip=true,tics=10]{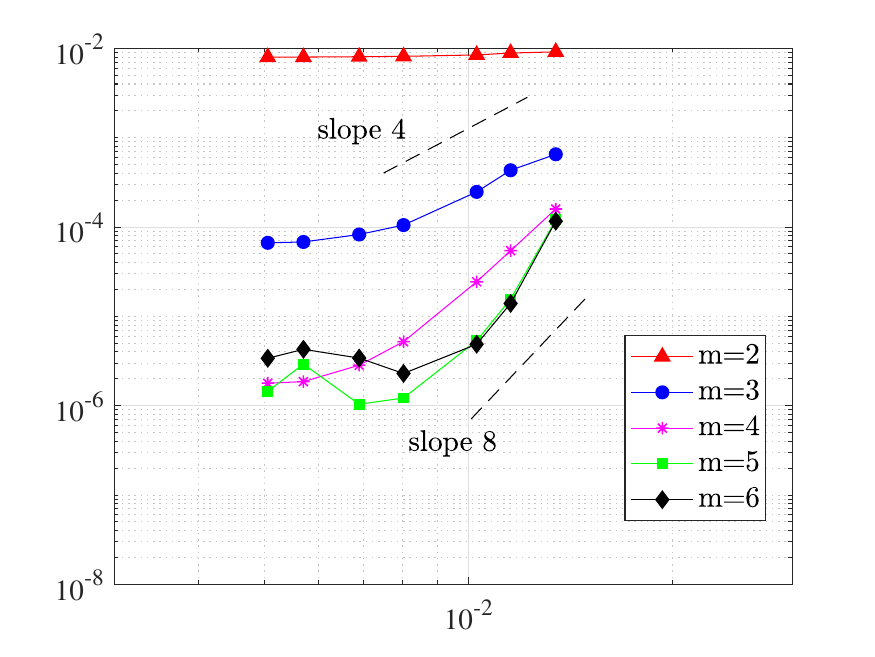}
\put (36,-3) {$1/\sqrt{N_Y}\sim h_Y$}
\put (0,32) {\scriptsize \rotatebox{90}{$L_2$ error}}
\put (48,75) {\scriptsize Torus}
\end{overpic}
\end{tabular}
\caption{
As a counterpart to Figure~\ref{Fig.AC_L2Err_NX}, this figure presents the relative $L_2(\S)$-error profiles with respect to $h_Y$. Here, $N_X$ remains constant with $N_X=3721$ for the sphere and $N_X=3112$ for the torus.
}
\label{Fig.AC_L2Err_Ny}
\end{figure}

We employ Algorithm~\ref{Algor:1} to solve \eqref{eq:ParaEq} with $\varepsilon=1$ and different (global) smoothness orders $m=2,3,4,5,6$. We set a small time step $\triangle t=10^{-8}$ and a final time $T=0.0001$ (as per \cite{mohammadi2019numerical}) to ensure the global error is primarily driven by spatial discretization. As outlined in Theorem \ref{thm:SpatialErr} and Theorem \ref{thm:quadratureErr}, the spatial error comprises two components: approximation error and quadrature error. To prioritize the influence of approximation error on spatial error, we choose a large fixed number of quadrature points $N_Y$ and a relatively small value for $N_X$.

Figure~\ref{Fig.AC_L2Err_NX} shows the $L_2(\S)$ errors on the sphere and torus relative to $h_X\approx 1/\sqrt{N_X}$ using different smoothness orders. For the unit sphere, we employ minimum energy node sets of varying sizes ($N=441$, $676$, $1024$, $1444$, $2025$, and $3136$). For the torus, we generate node sets using a mesh generator \cite{persson2004simplemesh}, with sizes $N=440$, $724$, $1024$, $1380$, $1952$, and $3112$.
The convergence rates of the numerical solutions for the Allen-Cahn equation on both the sphere and the torus significantly exceed the theoretical rates of $\mathcal{O}(h_X^{m-1/2})$. This phenomenon, known as super-convergence, has been observed in various studies \cite{fuselier2013high,fuselier2015order,sun2022kernel,chen2023kernel}.

To study the effect of the local Lagrange function's support size, we observe that the number of points within the radius $r_X=Kh_X|\log(h_X)|$ will be $n:=|X_{\xi}|\sim K^2(\log(N_X))^2$. Additionally, the theoretical considerations and numerical results in \cite{fuselier2013localized} suggest that selecting the number of nearest neighbors as $n=K^2\lceil(\log_{10}(N_X))^2\rceil$ with $K^2=7$ yields satisfactory results for constructing local Lagrange functions. Our numerical experiments in Table \ref{tab:EffectOfParameterK} suggest that using $K^2\geq 1$ on the unit sphere and $K^2\geq 3$ on the torus can also provide comparable accuracy. It's important to consider, however, that opting for larger $M=K^2$ values could lead to higher computational cost, despite potentially enhancing accuracy.

\begin{table}[t]
\centering
\caption{Computation times and numerical errors obtained by applying Algorithm~\ref{Algor:1} to the Allen-Cahn equation \eqref{eq:ParaEq} with varying $n$ values. For the unit sphere, $N_X=3136$ and $N_Y=10001$ are used, while for the torus, $N_X=3124$ and $N_Y=9470$ are used.}
\label{tab:EffectOfParameterK}
\begin{tabular}{*{6}{c}}
\toprule
\multirow{2}{*}{$K^2$} & \multirow{2}{*}{$n$} & \multicolumn{2}{c}{Sphere} &\multicolumn{2}{c}{Torus}\\
\cline{3-6}
\multicolumn{2}{c}{}& $L_2$ error & CPU(s) & $L_2$ error & CPU(s)  \\
\midrule
1& $13$&  4.14227e-7 &    191.56       &  -  & -\\
2& $26$&  4.14219e-7 &    205.98      &  -  & -\\
3 & $39$& 4.15629e-7 &    229.30 & 3.30729e-6   &  222.64    \\
5 & $65$&  4.14221e-7 &    243.13 &  3.30195e-6 &  231.81   \\
7& $91$& 4.14228e-7 &    261.92   &  3.30181e-6  &  241.50 \\
11& $143$ &  4.14228e-7 & 292.56& 3.30176e-6  &  272.48   \\
\bottomrule
\end{tabular}
\end{table}

To numerically estimate the quadrature error bound, we use a fixed number of centers $N_X$. Figure \ref{Fig.AC_L2Err_Ny} shows the $L_2(\S)$ errors in relation to the fill distance $h_Y$. Fibonacci node sets of various sizes $N_Y=5001$, $7501$, $10001$, $15001$, $20001$, $30001$, $40001$ are used for quadrature on the sphere. Likewise, node sets generated by \cite{persson2004simplemesh} of differing sizes $N_Y=5534$, $7520$, $9470$, $15560$, $21030$, $30704$, $39122$ are employed on the torus.
As expected, the observed convergence rates outperform the estimated rates in Theorem \ref{thm:quadratureErr}. Furthermore, there is hardly any convergence for $m=2$, which confirms the necessity of the requirement in Theorem \ref{thm:quadratureErr}.

For the temporal accuracy test, we solve the equation \eqref{eq:ParaEq} up to time $T=1$ with various time steps by utilizing Mat\'{e}rn kernel with $m=5$ (shape parameter $\epsilon=8$). The error profiles are presented in Table \ref{tab:ACEq_LLM_timetest_sph} for both the unit sphere ($N_X=961$, $N_Y=30001$) and the torus ($N_X=1024$, $N_Y=30704$). The numerical results in Table \ref{tab:ACEq_LLM_timetest_sph} demonstrate that the AVF method did yield a second order of convergence in time for both surfaces as expected.

\begin{table}[t]
\centering
\caption{Time accuracy test results for the Allen-Cahn equation \eqref{eq:ParaEq} up to $T=1$, computed using Algorithm~\ref{Algor:1}. For the unit sphere, $N_Y=30001$ is used, while for the torus, $N_Y=30704$ is used.}
\label{tab:ACEq_LLM_timetest_sph}
\begin{tabular}{*{5}{c}}
\toprule
$\triangle t$& $N_X=961$, sphere &rate   & $N_X = 1024$, torus & rate   \\
\midrule
$0.04$&    1.1439e-4 &        &  5.3426e-3  & \\

$0.02$&    2.8583e-5 &  2.00  &  1.3403e-3  & 1.99  \\

$0.01$&    7.1439e-6 &  2.00  &  3.3603e-4  & 2.00 \\

$0.005$&   1.7874e-6 &  2.00  &  8.6626e-5  & 1.96 \\

$0.0025$&  4.5772e-7 &  1.97  &  2.2519e-5  & 1.94 \\
\bottomrule
\end{tabular}
\end{table}

\subsection{Comparison with meshfree Galerkin methods on the Allen-Cahn equation}\label{subsec:AC}
We perform a series of numerical experiments on both the unit sphere and the torus to evaluate the stability and energy dissipation properties of our proposed method when solving the Allen-Cahn equation
$$u_t=\Delta_{\S}u+\frac{1}{\varepsilon^2}u(1-u^2).$$
These experiments focus on mean curvature flow and phase separation.


First, we simulate the circular interface motion with an initial condition $u_0(x)$ that's set to $+1$ within a spherical cap of radius $r_0$ and $-1$ elsewhere. We use the parameters from \cite{choi2015motion}, selecting $r_0=\frac{1}{\sqrt{2}}$ and $\epsilon=0.05$, and solve the Allen-Cahn equation with a time step of $\triangle t=5\times 10^{-4}$ until we reach a final time $T=0.35$. In this context, the radius at time $t$ is denoted as $r(t)$ and calculated using the formula from \cite{choi2015motion}: $r(t)=[1-(1-r_0^2)e^{2t}]^{1/2}$.

The progression of the diffuse interface, approximated by Algorithm~\ref{Algor:1} ($N_X=3721, N_Y=40001$), is shown in Figure~\ref{fig.AC_init1}. Over time, we observed that the radius of the spherical cap decreased.
In Figure~\ref{fig:AC_Init1_Ener}~(Left), we compare the numerical radius with the analytical radius. Our method shows greater accuracy than both the narrow band method \cite[Fig. 4]{choi2015motion} and the surface finite element method \cite[Fig. 10]{xiao2017stabilized}.
We also calculate the discrete total energy $\H_{d,G}^n$, and present the scaled results $\H_{d,G}^n/\H_{d,G}^0$ in Figure~\ref{fig:AC_Init1_Ener}~(Right). This confirms that the total discrete energy is non-increasing, supporting the conclusion drawn in Theorem \ref{thm:FullDiscrete}.

\begin{figure}[t]
\vspace{1cm}
\begin{tabular}{ccccc}
\begin{overpic}[width=0.22\textwidth,trim= 50 35 60 35, clip=true,tics=10]{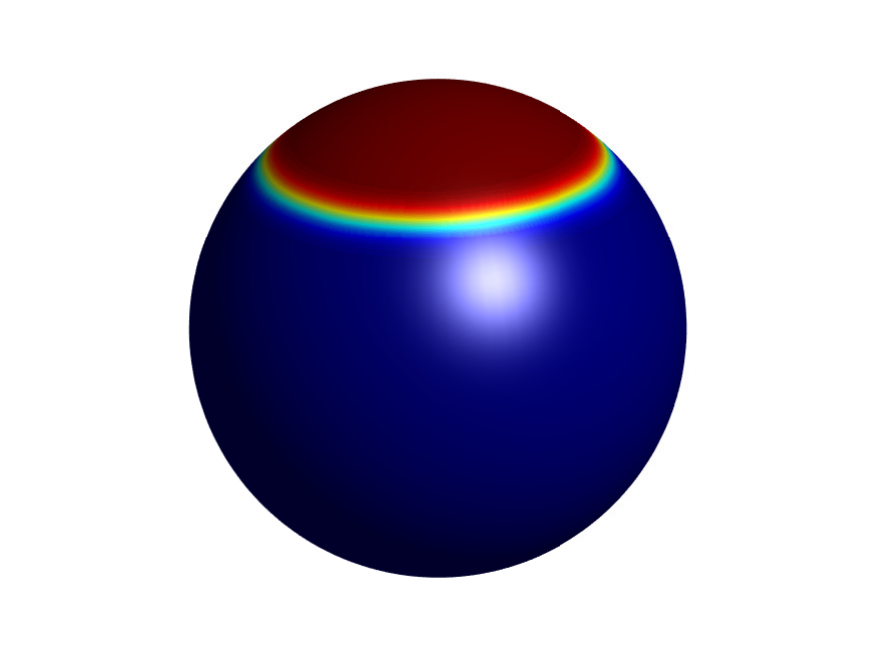}
\put (38,85) {\footnotesize $t=0.01$}
\end{overpic}
&
\begin{overpic}[width=0.22\textwidth,trim= 50 35 60 35, clip=true,tics=10]{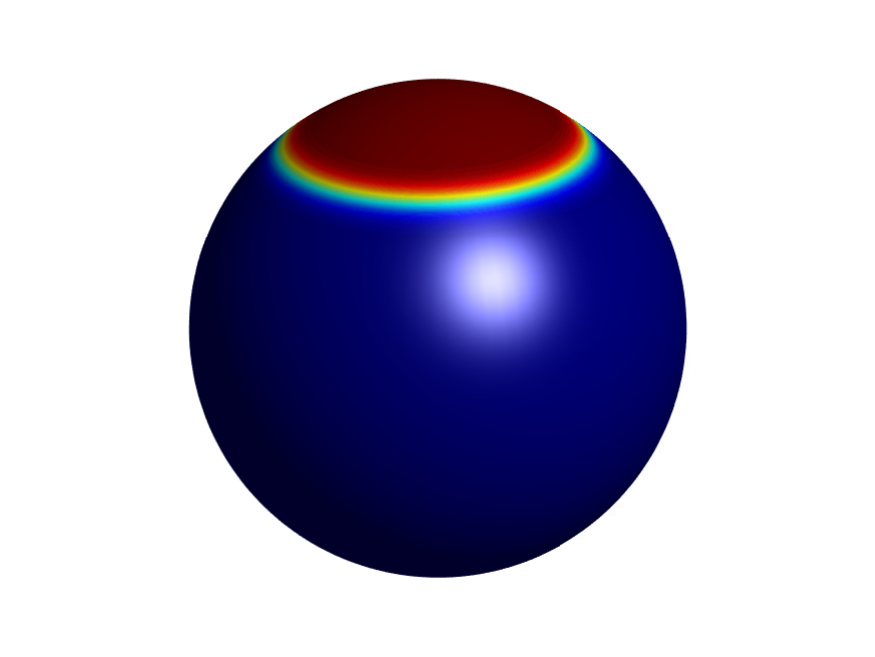}
\put (44,85) {\footnotesize $0.1$}
\end{overpic}
&
\begin{overpic}[width=0.22\textwidth,trim= 50 35 60 35, clip=true,tics=10]{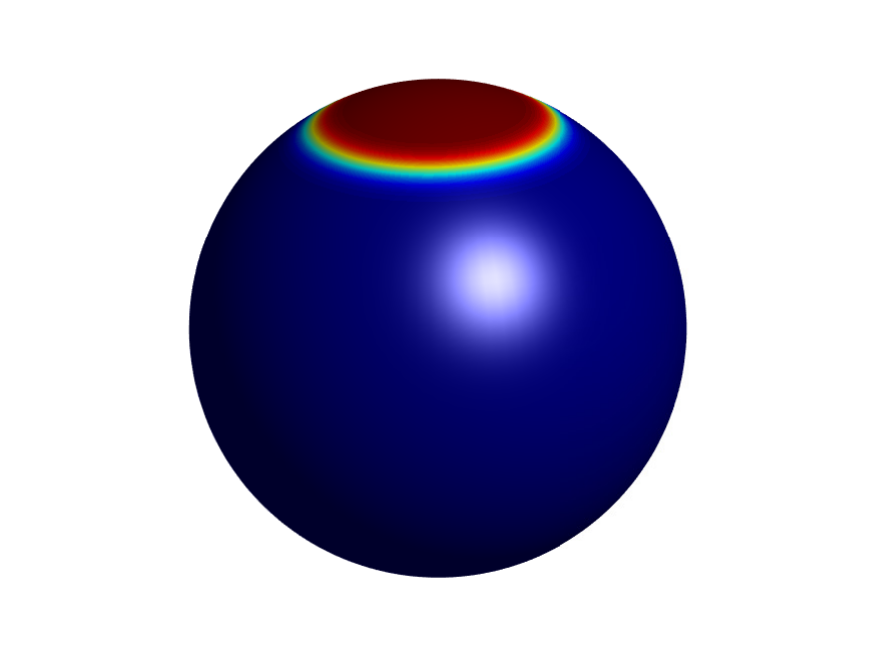} 
\put (44,85) {\footnotesize $0.2$}
\end{overpic}
&
\begin{overpic}[width=0.22\textwidth,trim= 50 35 60 35, clip=true,tics=10]{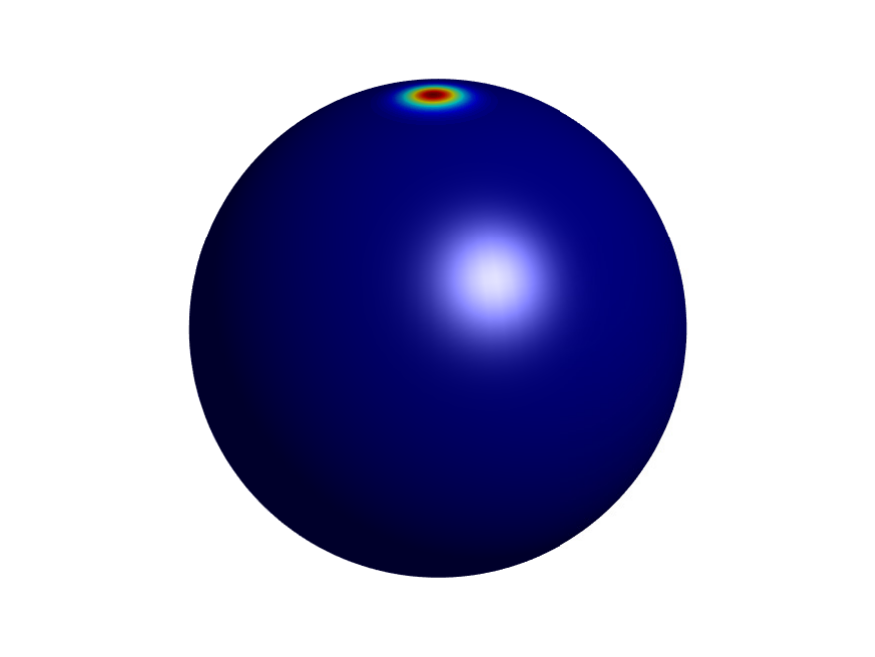} 
\put (42,85) {\footnotesize $0.35$}
\end{overpic}
\end{tabular}
\caption{
Numerical simulations of mean curvature flow modeled by the Allen-Cahn equation at different time levels, approximated by Algorithm~\ref{Algor:1}. The parameters used are $\epsilon=0.05$ and $\triangle t=5\times 10^{-4}$. As time evolves, the interfaces shrink by mean curvature, indicating a decreasing radius of the spherical cap.
}
\label{fig.AC_init1}
\end{figure}

\begin{figure}[htbp]
\vspace{0.3cm}
\centering
\begin{tikzpicture}[scale=0.8]
\begin{axis}[height=2.75in,
grid = major,
xmin=0, xmax=0.4,
ymin=0, ymax=0.8,
xtick={0,0.1,0.2,0.3,0.4},
ytick={0,0.2,0.4,0.6,0.8},
major grid style={line width=.1pt,draw=gray!50},
no markers,
line width = 1pt,
legend pos = north east,
xlabel={$t$}, ylabel={Radius},
legend entries={Analytic,$N_Y=40001$}]
\addplot  + [color = blue] table [x=t1,y=r1] {AC_radius.dat};
\addplot +[dashed, color = red] table [x=t2,y=r2] {AC_radius.dat};
\end{axis}
\end{tikzpicture}
\hspace{0.5cm}
\begin{tikzpicture}[scale=0.8]
\begin{axis}[height=2.75in,
grid = major,
xmin=0, xmax=0.4,
ymin=0, ymax=1,
xtick={0,0.1,0.2,0.3,0.4},
ytick={0,0.2,0.4,0.6,0.8,1},
major grid style={line width=.1pt,draw=gray!50},
line width=1.0pt,
legend pos = north east,
xlabel={$t$}, ylabel={Energy},
]
\addplot  + [solid, mark size = 1pt, color = blue] table [x=t, y=ener] {AC_Init1.dat};
\end{axis}
\end{tikzpicture}
\vspace{-1pt}
\caption{
Comparison of numerical and analytical radii demonstrating the accuracy of the proposed method (left), and scaled representation of non-increasing total discrete energy over time (right), as per Algorithm~\ref{Algor:1} for the Allen-Cahn equation solution.
}
\label{fig:AC_Init1_Ener}
\end{figure}
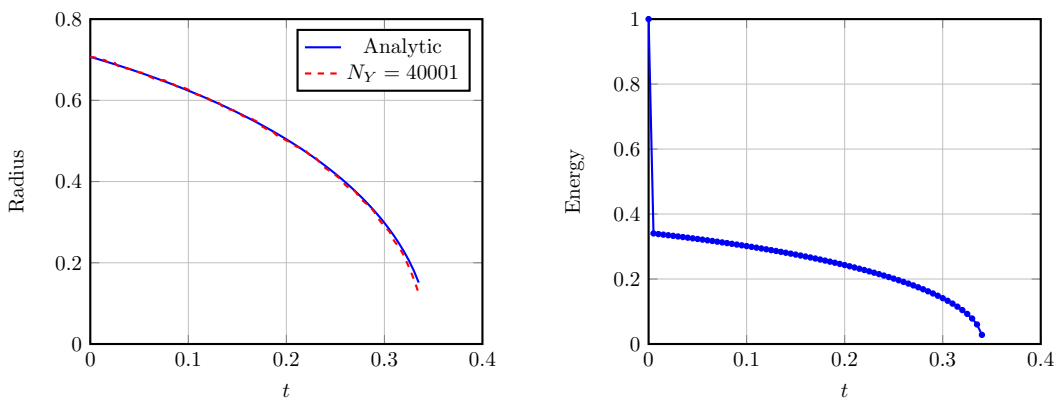

\begin{figure}[htbp]
\centering
\vspace{1cm}
\begin{tabular}{ccccc}
\begin{overpic}[width=0.45\textwidth,trim= 0 0 0 0, clip=true,tics=10]{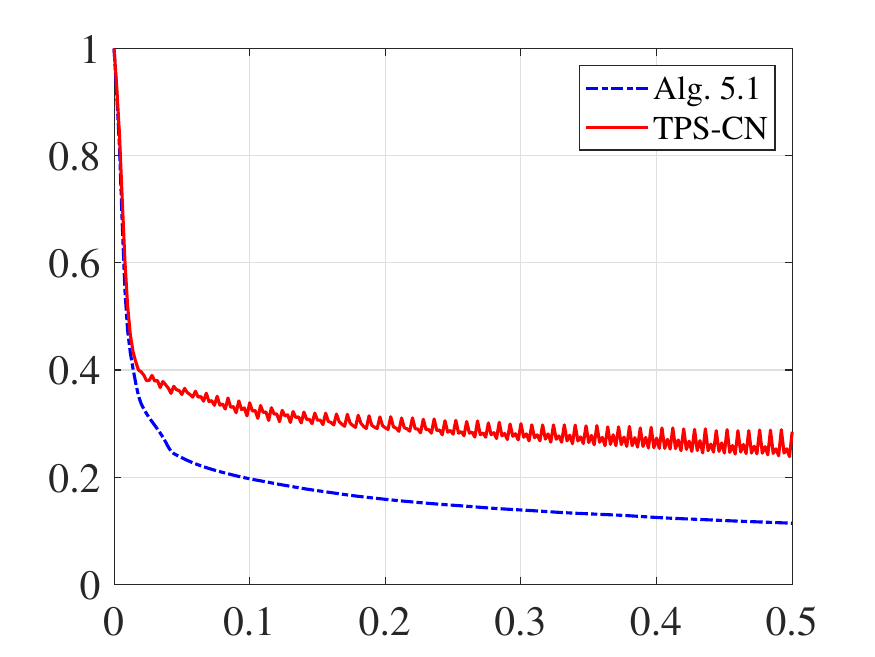}
\put (52,-2) {$t$}
\put (0,32) {\scriptsize \rotatebox{90}{Energy}}
\put (38,75) {\scriptsize $\triangle t=2\times10^{-3}$}
\end{overpic}
&
\begin{overpic}[width=0.45\textwidth,trim= 0 0 0 0, clip=true,tics=10]{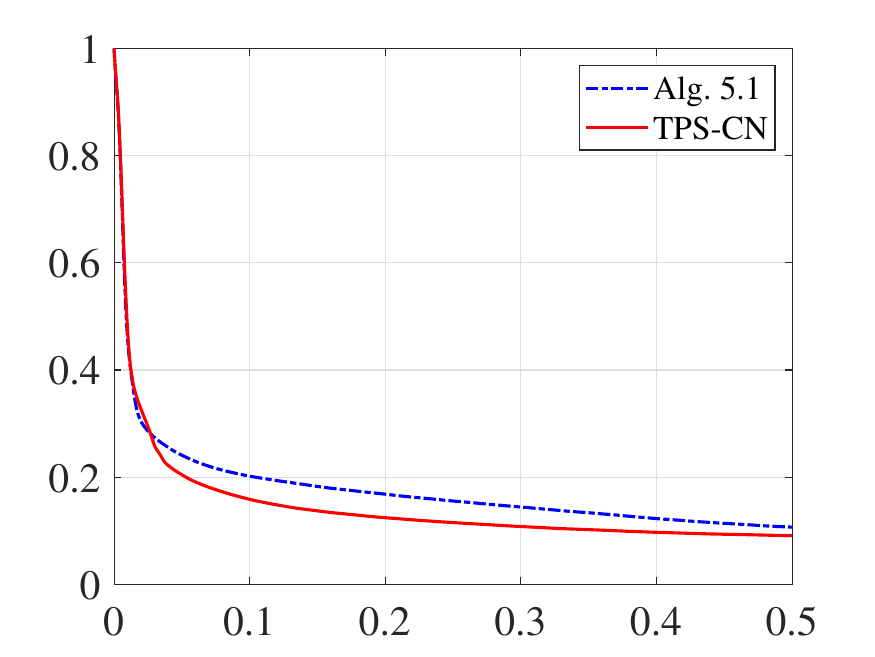}
\put (52,-2) {$t$}
\put (0,32) {\scriptsize \rotatebox{90}{Energy}}
\put (38,75) {\scriptsize $\triangle t=5\times10^{-4}$}
\end{overpic}
\end{tabular}
\caption{
Comparison of scaled discrete total energy of the Allen-Cahn equation with the random initial condition \eqref{eq:AC_RandInit} on the unit sphere. The results of Algorithm~\ref{Algor:1} and the TPS-CN method from \cite{kunemund2019high} are shown for two different time step sizes $\triangle t=2\times 10^{-3}$ and $5\times 10^{-4}$. The non-increasing energy of Algorithm~\ref{Algor:1} and the oscillatory energy of the TPS-CN method are highlighted. }\label{fig.AC_randomInit_enerComp}
\end{figure}

\begin{figure}[htbp]
\vspace{-1cm}
\centering
\begin{tabular}{ccccc}
\begin{overpic}[width=0.45\textwidth,trim= 75 55 55 10, clip=true,tics=10]{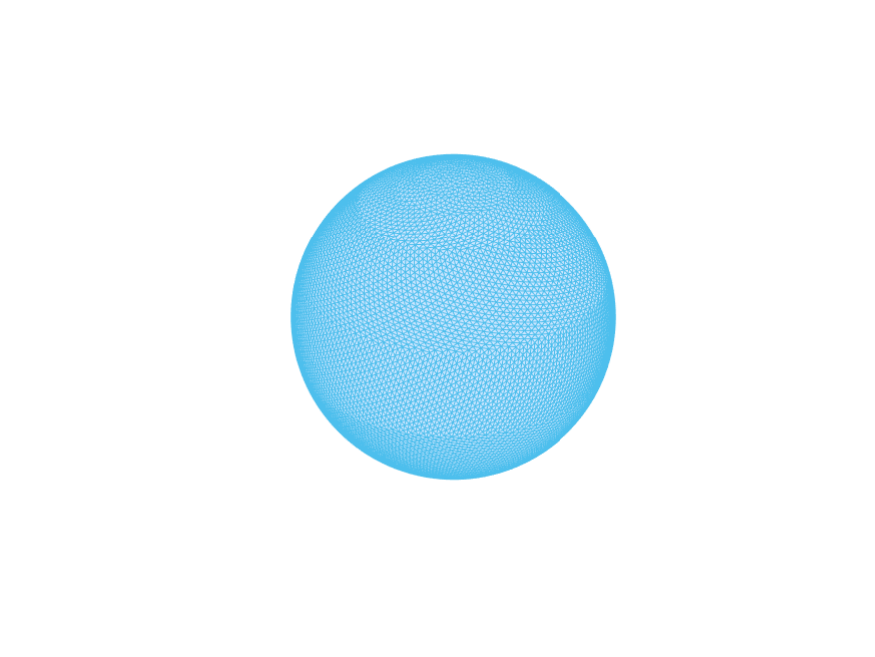}
\end{overpic}
&
\begin{overpic}[width=0.45\textwidth,trim= 85 65 65 55, clip=true,tics=10]{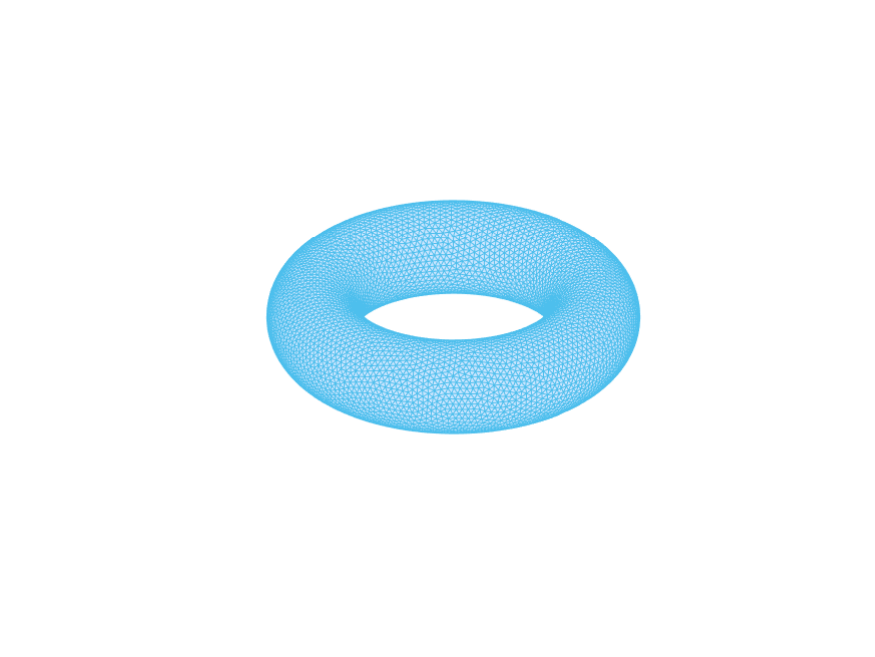}
\end{overpic}
&\vspace{-0.6cm}\\
\begin{overpic}[width=0.4\textwidth,trim = 10 0 20 5, clip=true,tics=10]{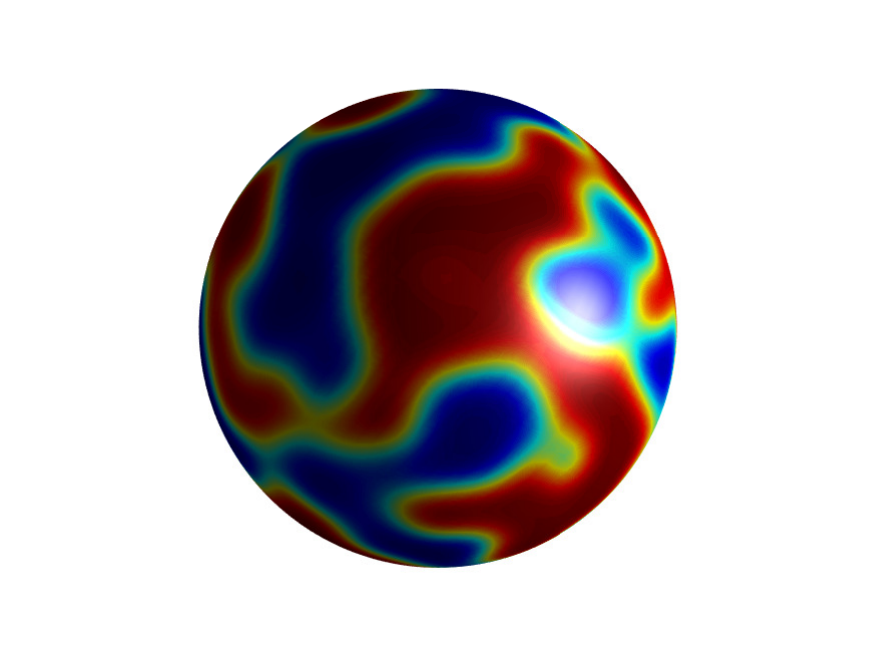} 
\end{overpic}
&
\begin{overpic}[width=0.4\textwidth,trim = 70 50 50 10, clip=true,tics=10]{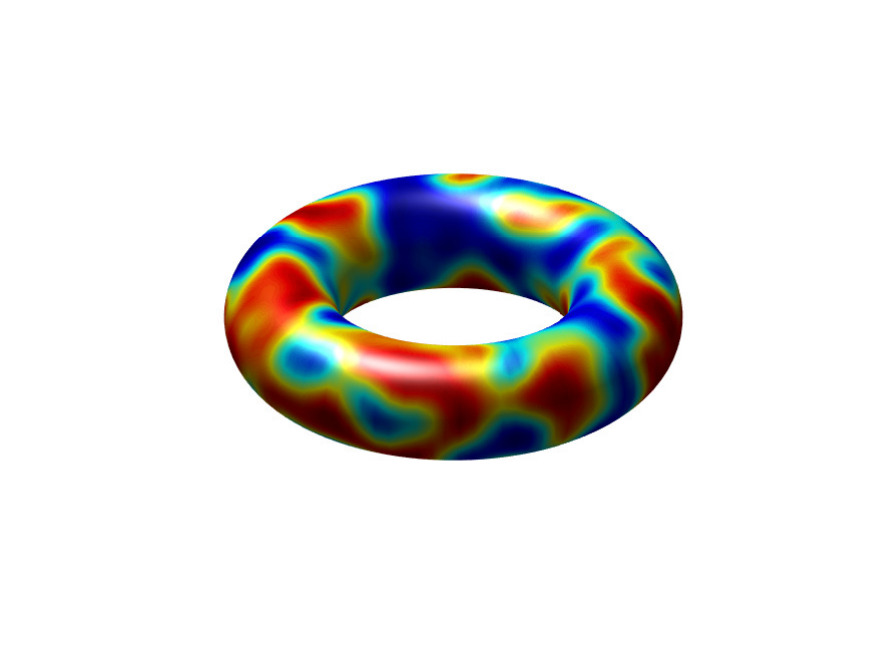} 
\end{overpic}
&\vspace{-0.6cm}\\
\begin{overpic}[width=0.4\textwidth,trim = 10 0 20 5, clip=true,tics=10]{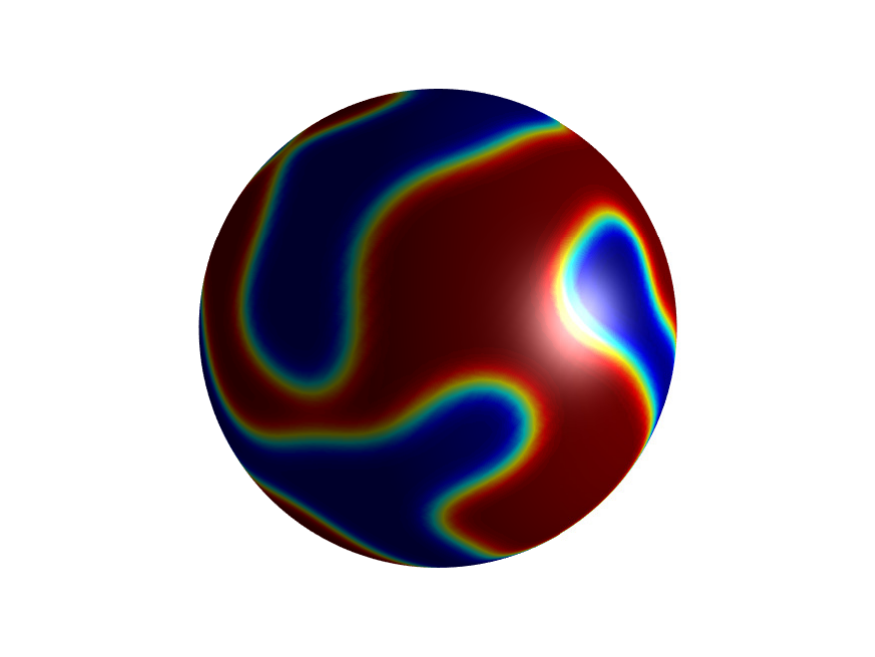} 
\end{overpic}
&
\begin{overpic}[width=0.4\textwidth,trim = 70 50 50 10, clip=true,tics=10]{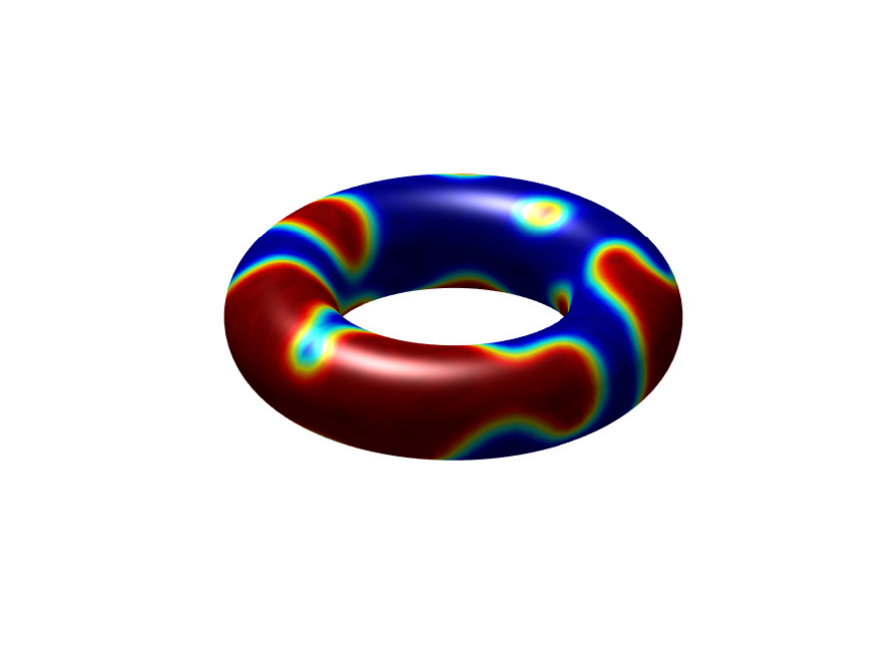} 
\end{overpic}
&\vspace{-0.6cm}\\
\begin{overpic}[width=0.4\textwidth,trim = 10 0 20 5, clip=true,tics=10]{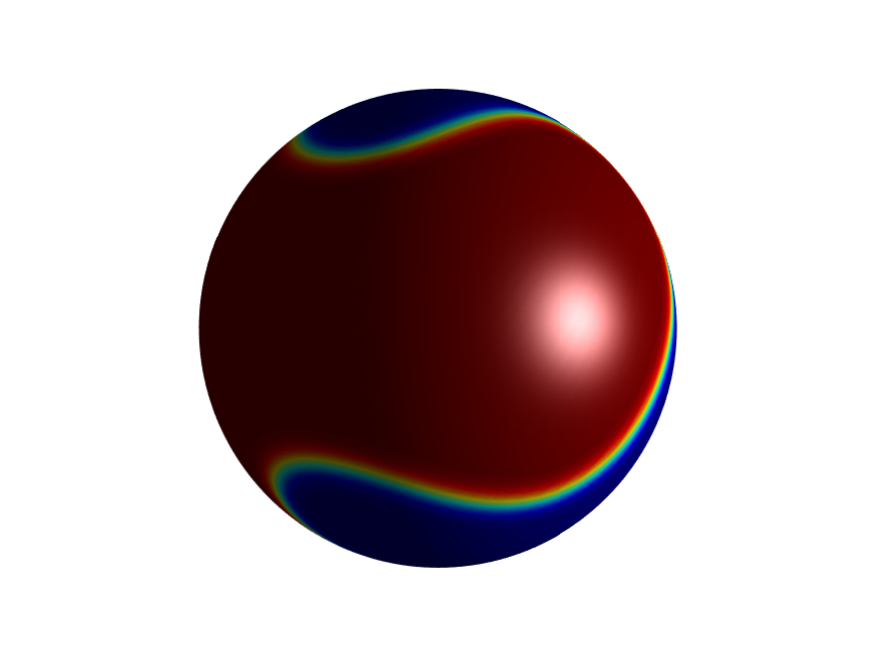} 
\end{overpic}
&
\begin{overpic}[width=0.4\textwidth,trim = 70 50 50 10, clip=true,tics=10]{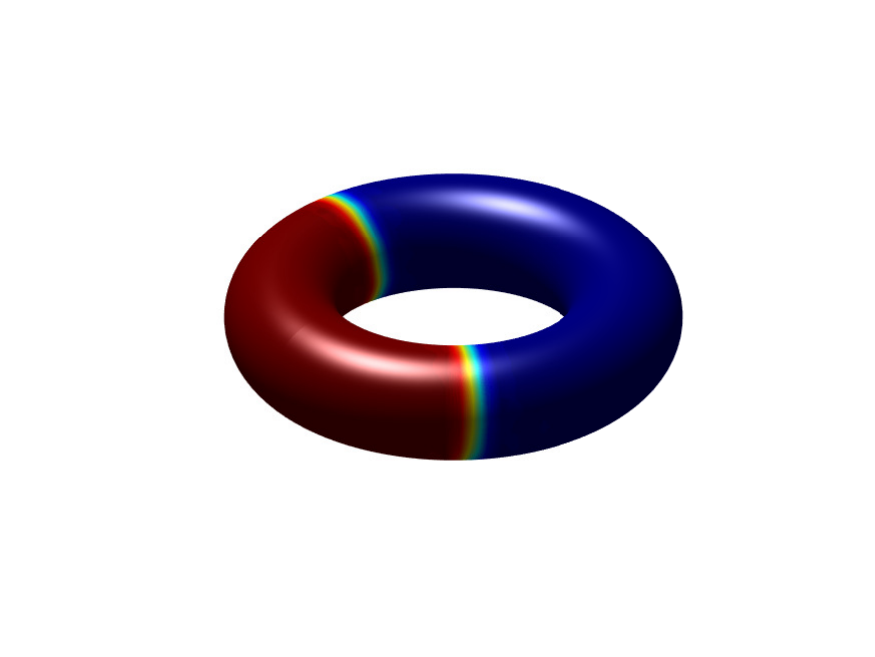} 
\end{overpic}
\end{tabular}
\caption{Phase separation modeled by the Allen-Cahn-equation with the random initial condition \eqref{eq:AC_RandInit} at different times. The left column shows the triangulation on the unit sphere and the numerical solutions at $t=0.01$, $0.03$, $0.3$. The right column shows the triangulation of the torus and corresponding numerical solutions at $t=0.006$, $t=0.02$, $t=0.5$ ($\triangle t=5\times 10^{-4}$, $\varepsilon=0.05$).}\label{fig.AC_randomInit}
\end{figure}

We next explore the phase ordering phenomenon. This phenomenon is commonly observed in diverse fields such as nonequilibrium statistical physics, hydrodynamics theory, and cell biology. For this experiment, the initial state is determined by a random variable uniformly distributed in $[-1,1]^3$ as
\begin{equation}\label{eq:AC_RandInit}
u(x,y,z , 0) \sim \mathcal{U}[-1,1]^3.
\end{equation}
To verify the robustness of the proposed Algorithm~\ref{Algor:1}, we compare it against the meshfree (standard) Galerkin method developed in \cite{kunemund2019high}, which uses thin-plate splines as basis functions and employs the Crank-Nicolson method for time discretization (TPS-CN).

Figure~\ref{fig.AC_randomInit_enerComp} shows the scaled discrete total energy of numerical solutions obtained from Algorithm~\ref{Algor:1} and the method described in \cite{kunemund2019high} for two distinct time step sizes, $\triangle t=2\times 10^{-3}$ and $5\times 10^{-4}$.
The final time is fixed at $T=0.5$, $\varepsilon=0.05$, and the center set size is $N_X=961$. It is observable that the total energy of Algorithm~\ref{Algor:1} remains non-increasing and exhibits little changes for both time step sizes $\triangle t$, thereby corroborating the conclusions in Theorem \ref{thm:FullDiscrete}. Conversely, as depicted in the right image of Figure~\ref{fig.AC_randomInit_enerComp}, the energy derived from the TPS-CN method with $\triangle t=2\times 10^{-3}$ displays oscillatory behavior. This confirms that Algorithm~\ref{Algor:1} is robust because it can conserve the energy dissipation property even for relatively large time step sizes. Moreover, Figure~\ref{fig.AC_randomInit} illustrates the triangulation of the unit sphere and the torus, as well as the evolution of phase separation processes on both surfaces until a steady state is reached using the time step size $\triangle t=5\times 10^{-4}$.

\subsection{Simulations of the Cahn-Hilliard equation}

In the final example, we conduct numerical simulations of the Cahn-Hilliard equation on various surfaces to demonstrate the effectiveness of the proposed method. We first address a benchmark problem on the unit sphere \cite{jeong2020conservative}, employing the following initial condition:
\begin{equation}\label{eq:CHInitCond}
\phi(x,0)=-1+\tanh\Big(\frac{\sin^{-1}(R_1)-\cos^{-1}(x_3)}{\sqrt{2}\varepsilon}\Big)
-\tanh\Big(\frac{\sin^{-1}(R_2)-\cos^{-1}(x_3)}{\sqrt{2}\varepsilon}\Big).
\end{equation}
The initial solution depicts two concentric circles with radii $R_1=0.8$ and $R_2=0.4$. Utilizing Algorithm~\ref{Algor:1} with \eqref{eq:CHfullDiscretization}, we simulate the Cahn-Hilliard equation with $\varepsilon=0.05$ using $N_X=1621$ centers and $N_Y=10001$ quadrature points. Figure~\ref{fig.CH_Init1_sph} shows the numerical solutions at different stages, which displays the gradual shrinking of the two concentric circles until the inner one completely vanishes.

Subsequently, we investigate the phase separation processes with random initial conditions on two distinct surfaces, an
ellipsoid\footnote{\scriptsize $\left\{(x_1,x_2,x_3)\in\R^3\big|x_1^2/a^2+x_2^2/b^2+x_3^2/c^2=1\right\}$, $a=2,b=1,c=1.5$.} and Dupin's cyclide\footnote{\scriptsize $\left\{(x_1,x_2,x_3)\in\R^3\big|(x_1^2+x_2^2+x_3^2+b^2-d^2)^2-4(ax_1+cd)^2-4b^2x_2^2\right\}$, $a=2,b=1.9,d=1,c^2=a^2-b^2$.}.
Figure~\ref{fig.CH_Init1_Surfaces} presents the triangulation on both surfaces and the numerical simulation outcomes at various times with $\triangle t=2\times 10^{-4}$ and $\varepsilon=0.05$. The corresponding scaled discrete energies of the numerical solutions are depicted in Figure~\ref{fig.CH_randomInit_enerComp}. Consistent with our previous observations, the discrete energy remains non-increasing for both surfaces, which further validates the effectiveness of Algorithm~\ref{Algor:1}.

\begin{figure}[htbp]
\begin{tabular}{ccccc}
\begin{overpic}[width=0.165\textwidth,trim= 120 80 120 45, clip=true,tics=10]{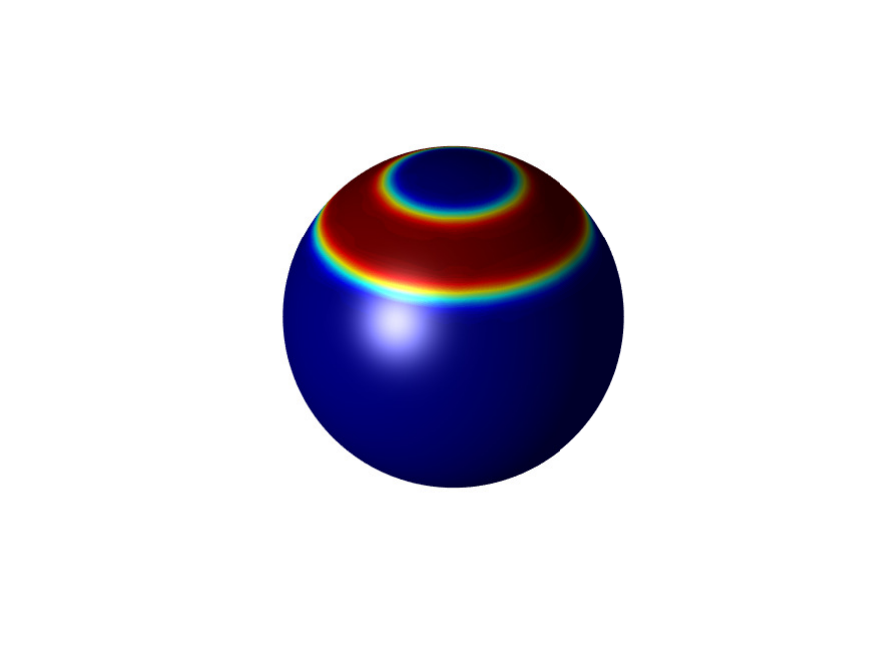}
\put (38,105) {\footnotesize $t=0$}
\end{overpic}
&
\begin{overpic}[width=0.165\textwidth,trim= 120 80 120 55, clip=true,tics=10]{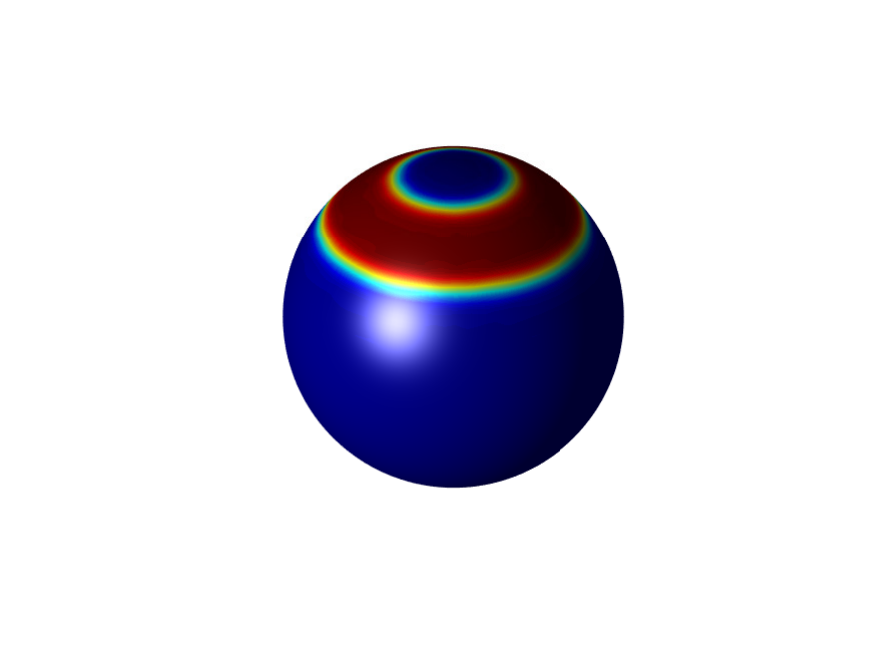}
\put (42,110) {\footnotesize $0.02$}
\end{overpic}
&
\begin{overpic}[width=0.165\textwidth,trim= 120 80 120 55, clip=true,tics=10]{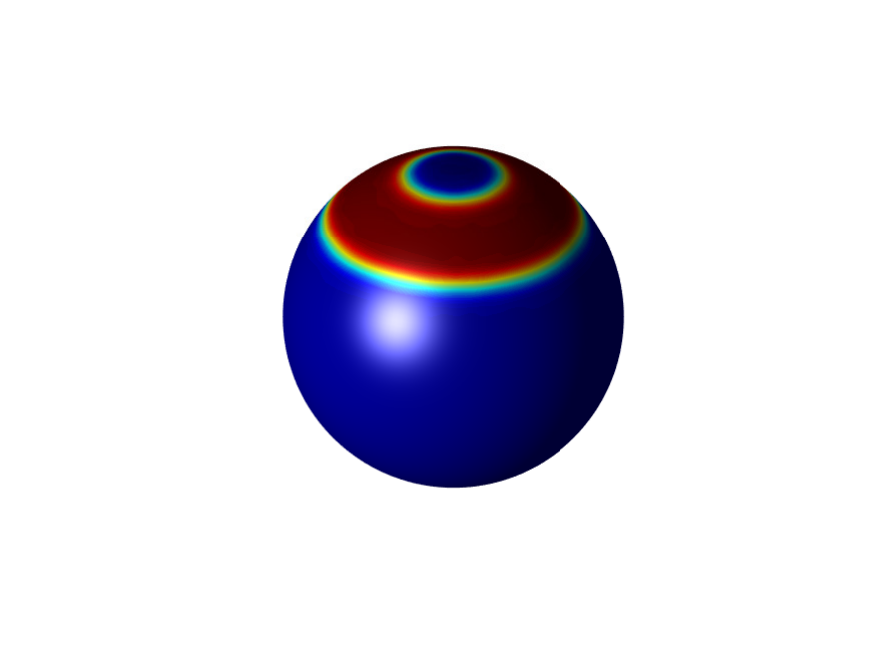} 
\put (42,110) {\footnotesize $0.04$}
\end{overpic}
&
\begin{overpic}[width=0.165\textwidth,trim= 120 80 120 55, clip=true,tics=10]{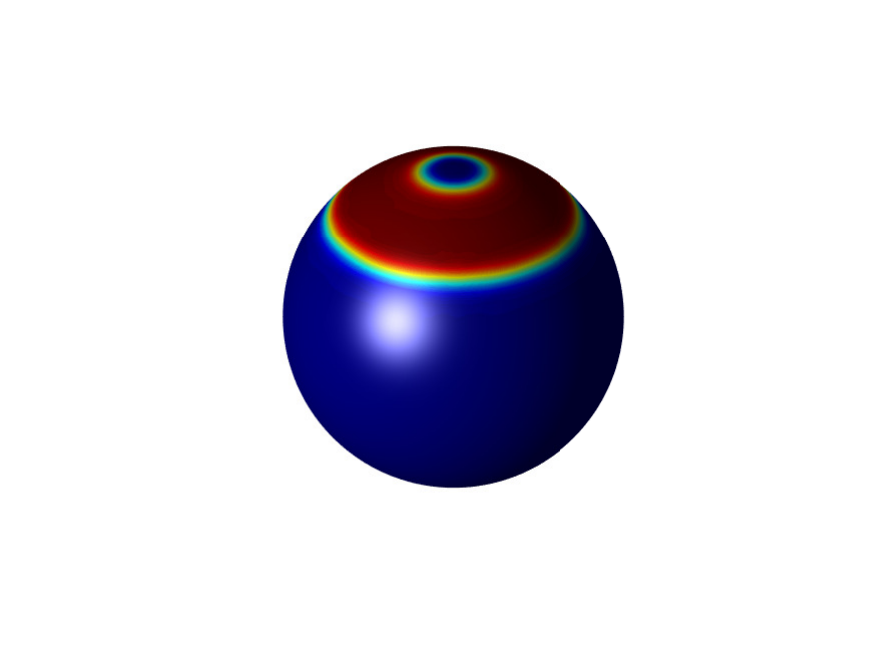} 
\put (42,110) {\footnotesize $0.06$}
\end{overpic}
&
\begin{overpic}[width=0.165\textwidth,trim=120 80 120 55, clip=true,tics=10]{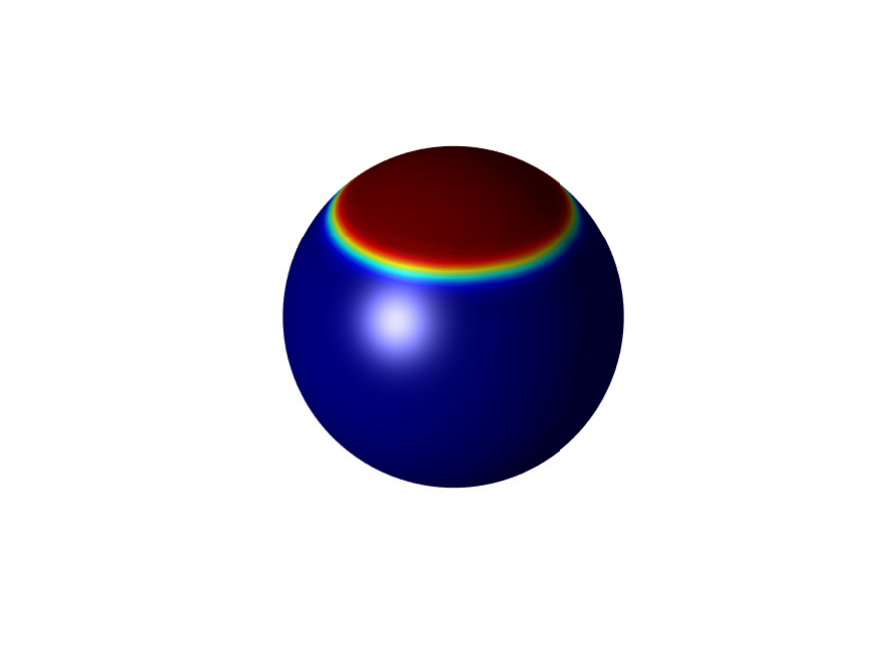} 
\put (42,110) {\footnotesize $0.08$}
\end{overpic}
\end{tabular}
\caption{
Numerical solutions of the Cahn-Hilliard equation on the unit sphere, using the initial condition as defined in \eqref{eq:CHInitCond}. The visualization vividly represents the evolution of the solution over time.
}\label{fig.CH_Init1_sph}
\end{figure}

\begin{figure}[htbp]
\begin{tabular}{ccccc}
\begin{overpic}[width=0.45\textwidth,trim= 80 60 80 45, clip=true,tics=10]{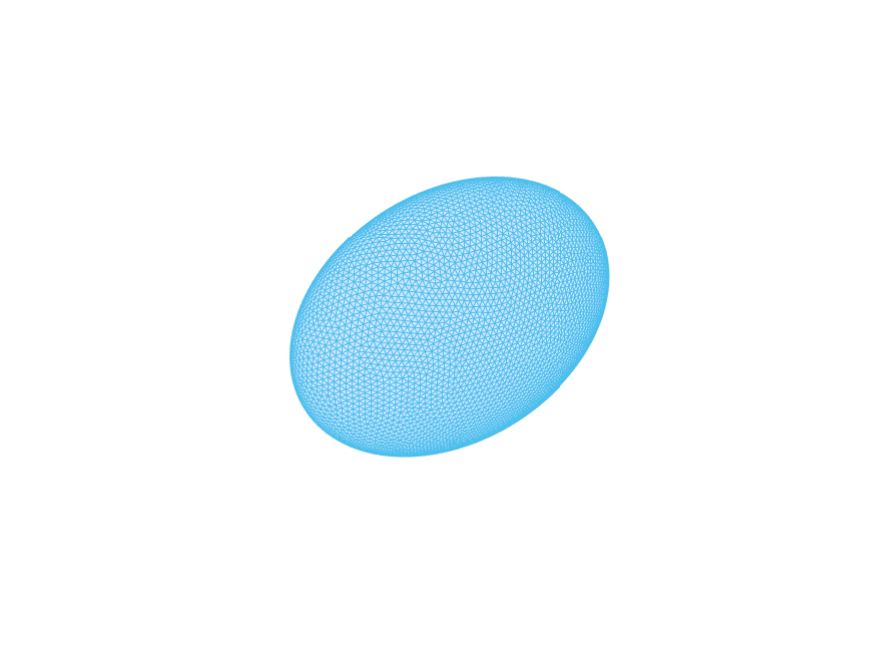}
\end{overpic}
&
\begin{overpic}[width=0.45\textwidth,trim= 80 60 80 45, clip=true,tics=10]{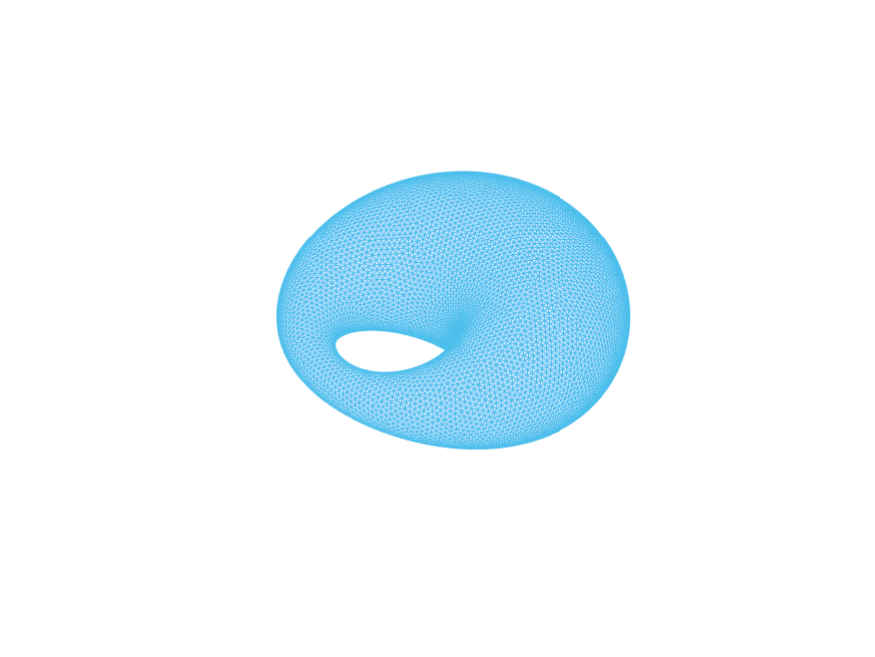}
\end{overpic}
&\\
\begin{overpic}[width=0.45\textwidth,trim = 60 60 60 60, clip=true,tics=10]{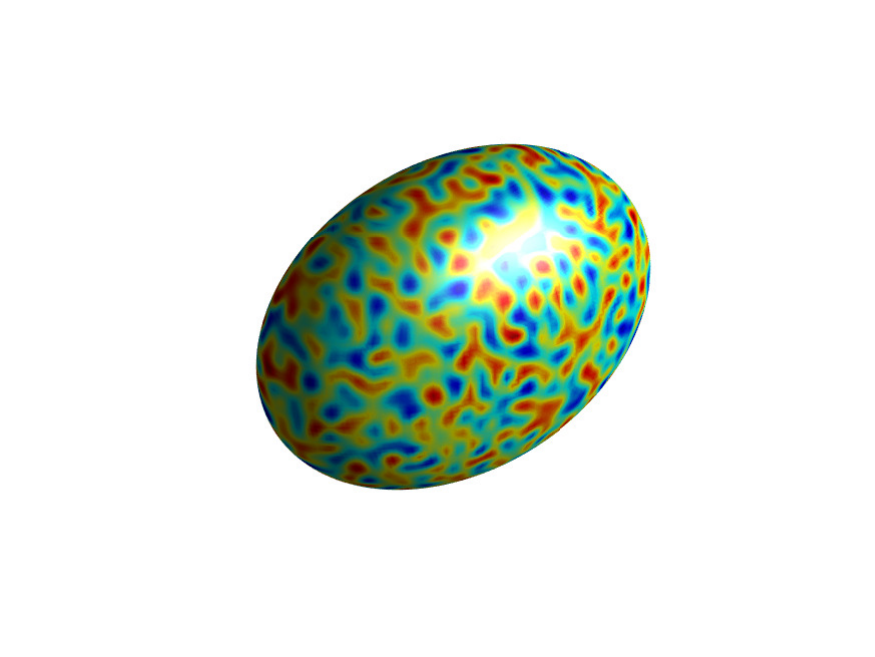} 
\end{overpic}
&
\begin{overpic}[width=0.45\textwidth,trim = 60 60 60 60, clip=true,tics=10]{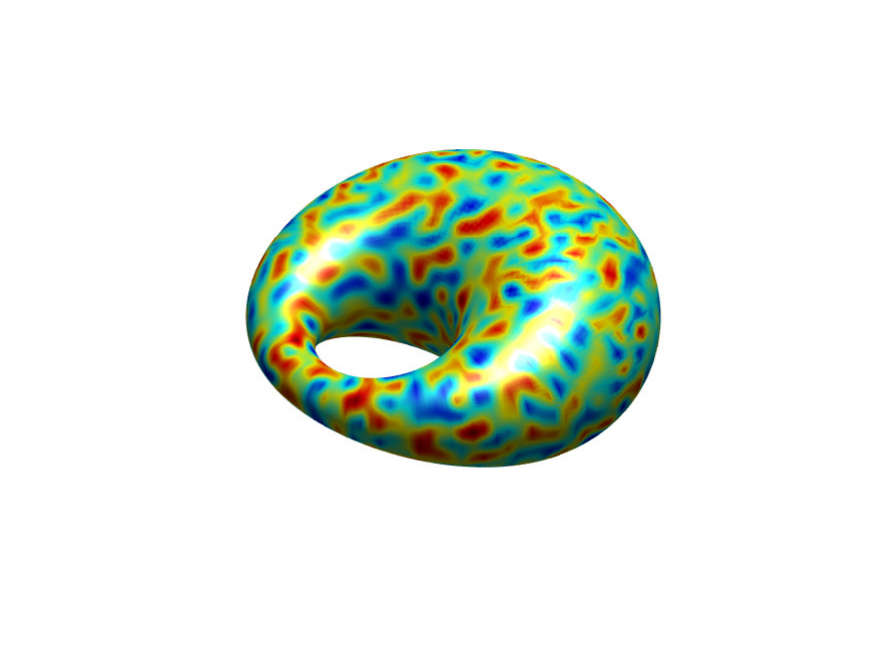} 
\end{overpic}
&\\
\begin{overpic}[width=0.45\textwidth,trim = 60 60 60 60, clip=true,tics=10]{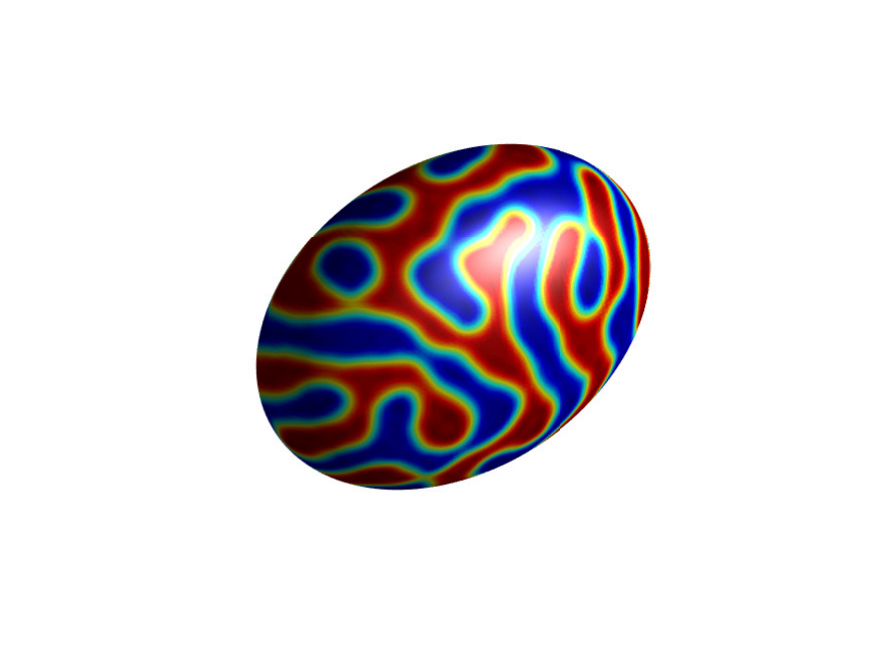} 
\end{overpic}
&
\begin{overpic}[width=0.45\textwidth,trim = 60 60 60 60, clip=true,tics=10]{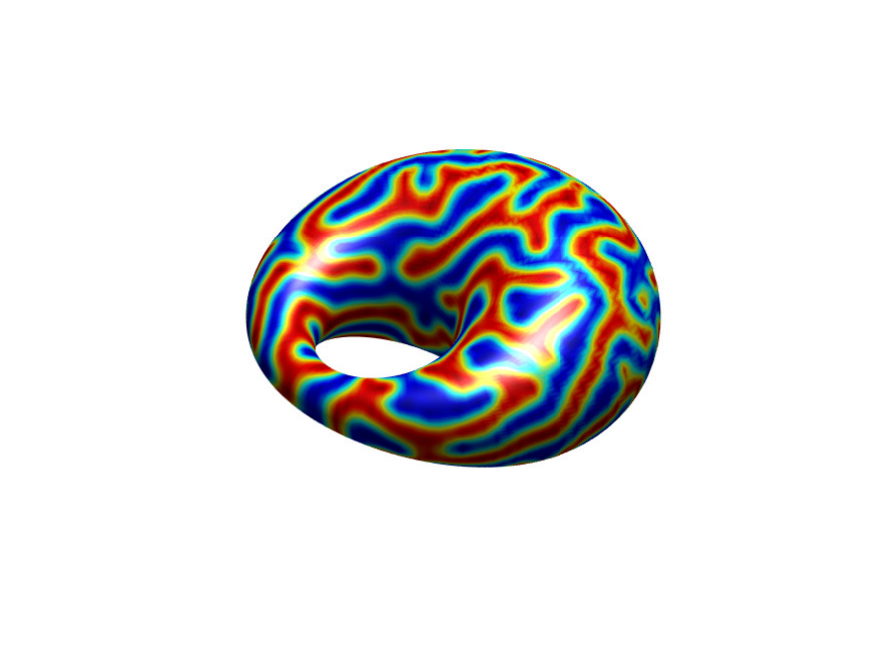} 
\end{overpic}&\\
\begin{overpic}[width=0.45\textwidth,trim = 60 60 60 60, clip=true,tics=10]{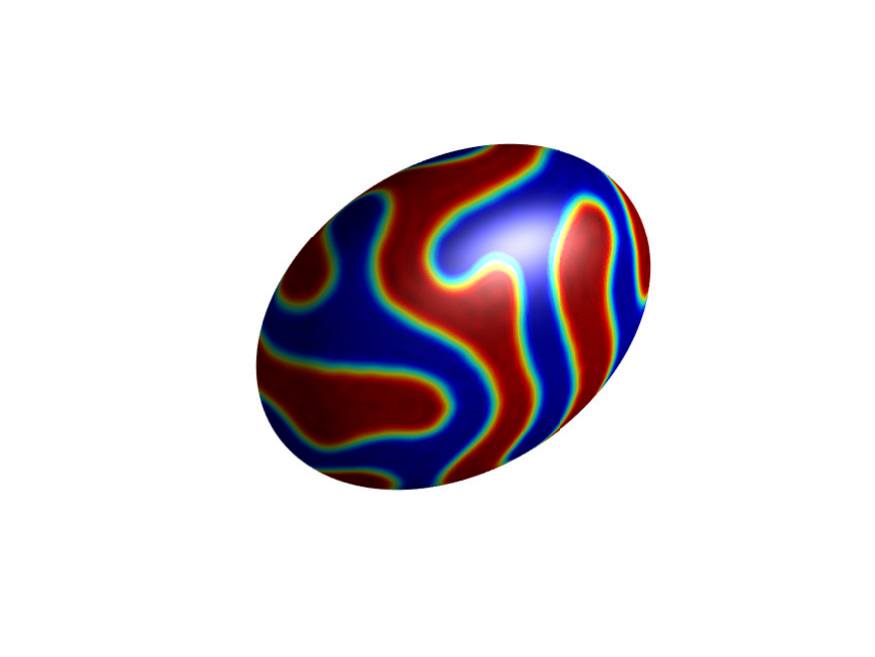} 
\end{overpic}
&
\begin{overpic}[width=0.45\textwidth,trim = 60 60 60 60, clip=true,tics=10]{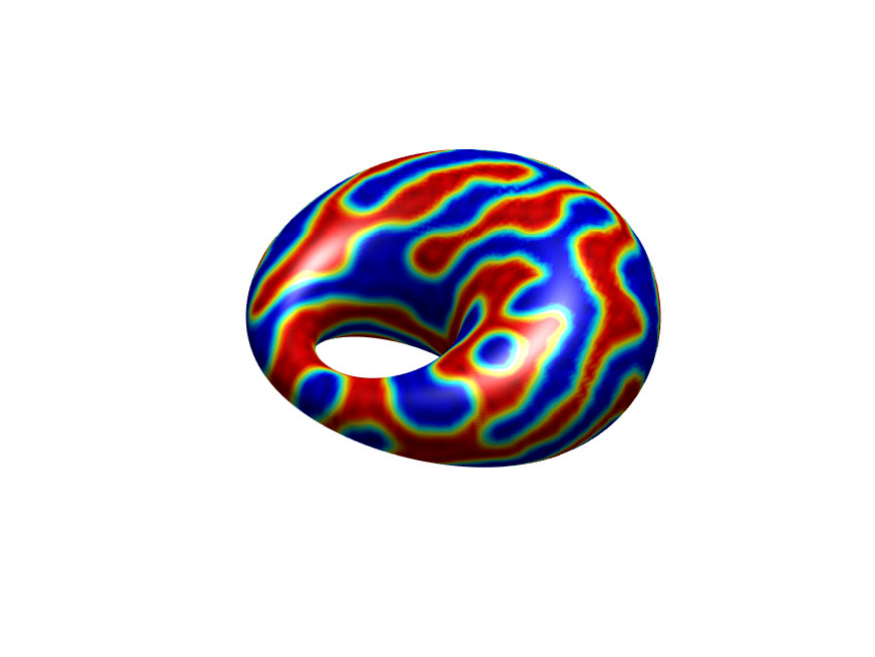} 
\end{overpic}
\end{tabular}
\caption{Phase separation modeled by the Cahn-Hilliard equation with the random initial condition at different times. The left column shows the triangulation on Ellipsoid and the numerical solutions at $t=0$, $0.008$, $0.04$. The right column shows the triangulation of Dupin's cyclide and corresponding numerical solutions ($\triangle t=2\times 10^{-4}$, $\varepsilon=0.05$).}\label{fig.CH_Init1_Surfaces}
\end{figure}

\begin{figure}[htbp]
\centering
\begin{tabular}{ccccc}
\begin{overpic}[width=0.45\textwidth,trim= 0 0 0 0, clip=true,tics=10]{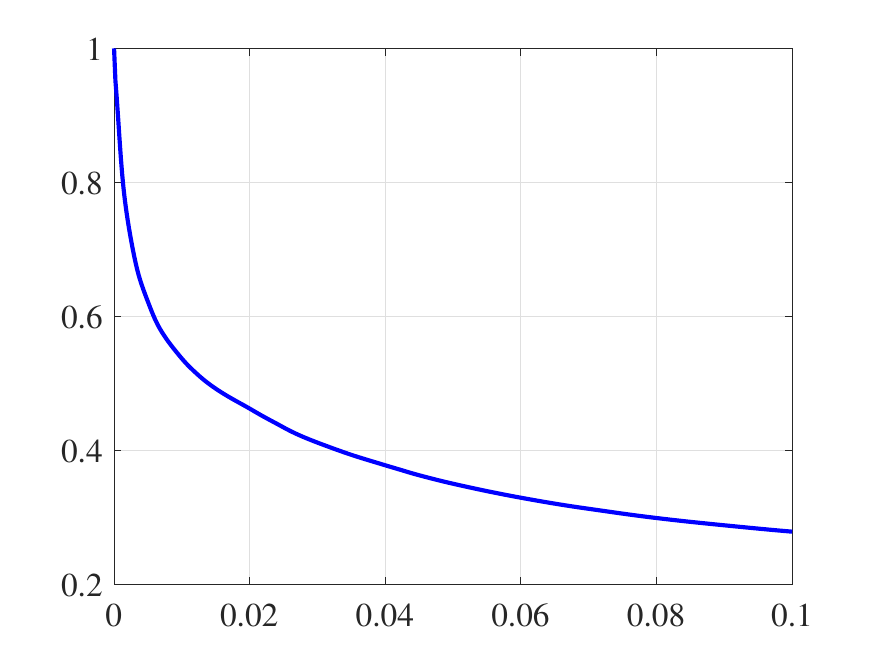}
\put (52,-2) {$t$}
\put (0,32) {\scriptsize \rotatebox{90}{Energy}}
\put (42,75) {\scriptsize Ellipsoid}
\end{overpic}
&
\begin{overpic}[width=0.45\textwidth,trim= 0 0 0 0, clip=true,tics=10]{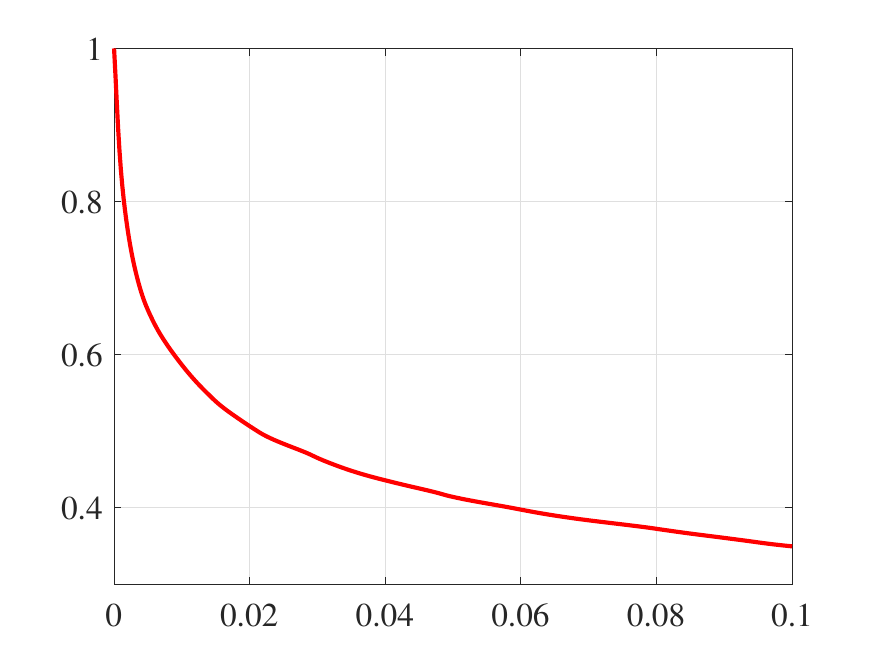}
\put (52,-2) {$t$}
\put (0,32) {\scriptsize \rotatebox{90}{Energy}}
\put (38,75) {\scriptsize Dupin's Cyclide}
\end{overpic}
\end{tabular}
\caption{Scaled energy of the Cahn-Hilliard equation with the random initial condition on the ellipsoid and Dupin's cyclide.}\label{fig.CH_randomInit_enerComp}
\end{figure}

\section{Conclusion}
%

The novelty in this research lies in the introduction of a meshless structure-preserving Galerkin scheme for dissipative partial differential equations on surfaces.
By formulating the PDE model in a variational form, we explore solutions within a finite-dimensional approximation space spanned by positive definite kernels. Our semi-discrete system is shown to preserve the energy dissipation property. Further, we extend the framework to derive a fully-discrete, structure-preserving scheme using the AVF method.

To demonstrate the effectiveness of our proposed approach, we present detailed schemes for solving two specific equations: the Allen-Cahn equation and the Cahn-Hilliard equation. We also delve into the convergence results, specifically for the Allen-Cahn equation.
Additionally, we outline the implementation details of our method, which include the computation of quadrature weights, the mass matrix, and the stiffness matrix. To manifest the accuracy and energy dissipation properties, we conduct several numerical simulations on various surfaces. The results indicate that the meshless Galerkin method employing local Lagrange functions achieves high accuracy while preserving the essential energy dissipation property.
Our future research will aim to apply the developed method to solve other types of structure-preserving PDEs on more complex surfaces.

\section*{\Large Declarations}
\noindent\textbf{Conflicts of interest}~~ ~The authors have no conflicts of interest to declare that are relevant to the content of this
article.\\

\noindent\textbf{Ethical approval}~~ ~The datasets and algorithms generated during the current study are available from the corresponding author on reasonable request.

\bibliographystyle{plain}
\bibliography{meshlessConservative}

\end{document}